\def\E{\mathbb{E}}
\def\tran{\mathsf{T}}
\def\R{\mathbb{R}}
\def\N{\mathbb{N}}
\def\bx{\mathbf{x}}
\def\rsm{\bm{\sigma}}
\newcommand\dif{\mathop{}\!\mathrm{d}}
\def\pl{\mathrm{pl}}
\def\E{\mathbb{E}}
\def\R{\mathbb{R}}
\def\P{\mathbb{P}}
\def\eps{\varepsilon}
\def\1{\mathbf{1}}
\def\tce{t_c + \eps}
\def\tce2{t_c + \frac{\eps}{2}}
\newtheorem*{theorem*}{Theorem}
\newtheorem{theorem}{Theorem}
\newtheorem{lemma}[theorem]{Lemma}
\newtheorem{defn}[theorem]{Definition}
\newtheorem*{defn*}{Definition}
\newtheorem*{prop*}{Proposition}
\newtheorem{conjecture}[theorem]{Conjecture}
\newtheorem*{conj*}{Conjecture}
\newtheorem{question}[theorem]{Question}
\newtheorem{assumption}{Assumption}
\newtheorem{remark}{Remark}
\newtheorem*{fact*}{Fact}
\begin{document}
\title{Frozen $1$-RSB structure of the symmetric\\ Ising perceptron}

\author{Will Perkins\thanks{Department of Mathematics, Statistics, and Computer Science, University of Illinois at Chicago, math@willperkins.org.  Supported in part by NSF grants DMS-1847451 and CCF-1934915.} \and
Changji Xu \thanks{Center of Mathematical Sciences and Applications, Harvard University, cxu@cmsa.fas.harvard.edu.}}
\date{\today}

\maketitle
\thispagestyle{empty}

\begin{abstract}
We prove, under an assumption on the critical points of a real-valued function, that the symmetric Ising perceptron exhibits the `frozen 1-RSB' structure conjectured by Krauth and M{\'e}zard in the physics literature; that is,  typical solutions of the model lie in clusters of vanishing entropy density.  Moreover, we prove this in a very strong form conjectured by Huang, Wong, and Kabashima: a typical solution of the model is isolated with high probability and the Hamming distance to all other solutions is linear in the dimension.   The frozen 1-RSB scenario is part of a recent and intriguing explanation of the performance of learning algorithms by Baldassi,  Ingrosso,  Lucibello, Saglietti, and Zecchina.  We prove this structural result by comparing the symmetric Ising perceptron model to a planted model and proving a comparison result between the two models. Our main technical tool towards  this comparison is an inductive argument for the concentration of the logarithm of number of solutions in the model.  
\end{abstract}

%\clearpage
%\pagenumbering{arabic} 

\section{Introduction}

The \textit{perceptron model} is a simple model of a neural network storing random patterns.  It has been studied in several fields including information theory~\cite{cover1965geometrical}, statistical physics~\cite{G87,GD88,KM89,sompolinsky1990learning}, and probability theory~\cite{talagrand2010mean,kim1998covering,talagrand1999intersecting}. 

There are several variants of the model, grouped into two main categories: spherical perceptrons in which patterns are $N$-dimensional vectors on the unit sphere and Ising perceptrons in which patterns are $\pm 1$ vectors of length $N$.  In each case, we want to understand how many random patterns can be `stored' by a neural network formed by taking random synapses $J_{ij}$ and applying a given activation function.  Here we will study the Ising perceptron.

Let $\Sigma_N = \{\pm1 \}^N$ and let $\{X_i\}_{i \geq 1}$ be a sequence of independent $N$-dimensional standard Gaussian vectors\footnote{In another variant of the model the constraint vectors $X_i$ are given by independent samples from $\Sigma_N$ (Bernoulli disorder). While there are significant differences between the spherical and Ising perceptrons, the choice of Gaussian or Bernoulli disorder is insignificant for the properties discussed here.}.  For a real valued function $\phi$, a real number $\kappa$, and $X \in \R^N$, define 
\begin{equation}
H_{\phi, \kappa}(X) =  \left \{ \sigma \in \Sigma_N : \phi \left ( \langle X, \sigma \rangle /\sqrt{N}  \right) \le \kappa \right \} \,.
\end{equation}

  The \textit{solution space} of the Ising perceptron with Gaussian disorder, activation function $\phi$, threshold $\kappa$, and $m$ constraints is the random subset of $\Sigma_N$, 
\[ S= S_{\phi,\kappa, N, m} =  \bigcap _{i=1}^m   H_{\phi,\kappa}(X_i)   \,.\]
Thus $S$ is a random subset of the Hamming cube $\Sigma_N$.  We call the vectors $X_i$ \textit{constraint vectors}. The constraints depend on the set of constraint vectors, the activation function $\phi$, and the threshold $\kappa$. 
The classic Ising perceptron corresponds to the choice $\phi(x) = x$ where the most studied case is $\kappa= 0$ (e.g.~\cite{KM89,ding2019capacity}).

\subsection{Structure of the solution space}
\label{secStructure}

We will be concerned with the typical structure of the solution space  $S$ as a function of $\phi, \kappa$, and the constraint density $\alpha := m/N$, as $N \to \infty$. 

The most basic structural question is whether $S$ is empty or not. The \textit{capacity} of the perceptron is defined as the random variable
\begin{equation}
M_{\phi, \kappa}(N) =  \max \{ m :  S_{\phi,\kappa, N, m} \ne \emptyset \} \,, 
\end{equation}
and the \textit{critical capacity density}
\begin{equation}
\alpha_{c,\phi} (\kappa) = \inf \left \{ \alpha :  \liminf_{N \to \infty} \Pr[  S_{\phi,\kappa, N, \lfloor \alpha N \rfloor} =\emptyset ] =1  \right \} \, 
\end{equation}
is the typical constraint density of the capacity.   

For densities below the critical capacity density, when  the solution space is typically non-empty, we can ask about its  structure, and how this structure affects the performance of learning algorithms: algorithms that find some solution in $S$ given the instance defined by $\mathbf  X = (X_1, \dots, X_m)$.   

  Basic structural questions include whether solutions appear in connected clusters or are isolated; and what  the typical distance is from a solution to the next nearest solution.  For these structural properties we regard $\Sigma_N$ as the Hamming cube endowed with Hamming distance: for $\sigma, \sigma' \in \Sigma_N$,  
$$\mathrm{dist}(\sigma,\sigma') = |\{ i: \sigma_i \ne \sigma'_i \}| = \frac{n-  \langle \sigma, \sigma' \rangle }{2} \, .$$   

The perspective taken in recent work on Ising perceptrons in both statistical physics and computer science is to view the Ising perceptron as a random constraint satisfaction problem (CSP): $\Sigma_N$ is the set of possible solutions, and each random vector $X_i$ defines a constraint $ \phi \left ( \langle X, \sigma \rangle /\sqrt{N}  \right) \le \kappa$ on possible solutions $\sigma \in \Sigma_N$.  The critical capacity density $\alpha_{c,\phi}$ is then the satisfiability threshold of the model.  

Just as in the random $k$-SAT, random $k$-NAE-SAT, or random $k$-XOR-SAT problems, each constraint rules out a constant fraction of all solutions in $\Sigma_N$.  Where the perceptron differs from these other models is that in the perceptron each constraint involves all of the $N$ coordinates, while in the other models each constraint only involves $k$, which is held constant as $N \to \infty$. 

Random CSP's have been studied extensively in computer science, statistical physics, probability, and combinatorics since Mitchell, Selman, and Levesque~\cite{mitchell1992hard} observed empirically that random $k$-SAT formulae at certain densities proved extremely challenging for widely used SAT solvers.  Understanding this phenomenon is a major ongoing challenge that has led to the development of a new field of inquiry at the intersection of computer science, physics, and mathematics. 

A key to the current understanding of random CSP's is understanding the typical structure of the solution space at different constraint densities.  A beautifully detailed but non-rigorous picture was put forth by Krzaka{\l}a, Montanari,  Ricci-Tersenghi, Semerjian, and Zdeborov{\'a}~\cite{krzakala2007gibbs}, based on the cavity method from statistical physics. 

In a paper that transformed the computer science perspective on random computational problems, Achlioptas and Coja-Oghlan~\cite{achlioptas2008algorithmic}  proved the existence of a `clustering threshold' at which the solution space of certain random CSP's breaks apart into exponentially many clusters separated by linear Hamming distance. They also observed that this clustering threshold coincides asymptotically with the threshold at which known efficient search algorithms for these problems fail.  

A further structural property is \textit{freezing}.  Given a solution $\sigma$, a variable (or coordinate) is free if flipping that coordinate results in another solution $\sigma'$.  Variables that are not free are {frozen}.  The freezing threshold for a random CSP is the threshold at which typical solutions have a linear number of frozen variables~\cite{zdeborova2007phase,molloy2018freezing}. 

These two properties -- clustering and freezing -- are conjectured to be the source of computational hardness in random CSP's.  Identifying  thresholds for the onset of these and other structural phenomena and rigorously connecting them to the performance of algorithms have been a main focus of the field of random computational problems in the last decade.

For the Ising perceptron, the conjectured structural picture looks strikingly different.  Krauth and Mezard~\cite{KM89} conjectured, by means of the replica method, that at all densities below the critical capacity, the solution space of the perceptron is dominated by clusters of vanishing entropy density (that is, of size $e^{o(n)}$), each cluster separated from the others by linear Hamming distance.  Wong, Kabashima, and Huang~\cite{huang2013entropy} and Huang and Kabashima~\cite{huang2014origin} refined these conjectures and posited that in fact typical solution in the Ising perceptron are  \textit{completely frozen}; that is, all coordinates are frozen and the solutions lie in clusters of size $1$, separated from all other solutions by linear Hamming distance.   Thus the Ising perceptron exhibits clustering and freezing in the strongest possible form throughout the entire satisfiability regime.    Based on the current conjectural understanding of random CSP's (see~e.g.~\cite{zdeborova2008constraint}), one might venture a guess that finding a solution in the Ising perceptron is computationally hard at all positive densities.  

However,  this theory is at odds with other work in physics on learning algorithms for the perceptron.  Braunstein and Zecchina~\cite{braunstein2006learning} observed empirically that a simple message-passing algorithm is able to find solutions at positive densities in the Ising perceptron (further algorithms followed in~\cite{baldassi2007efficient,baldassi2009generalization}).  Attempting to reconcile this apparent contradiction between the theory and empirical observations, Baldassi, Ingrosso, Lucibello,  Saglietti, and Zecchina~\cite{baldassi2015subdominant} conjectured that these successful learning algorithms were in fact finding solutions belonging to rare clusters of positive entropy density.  That is, although a $1-o(1)$ fraction of solutions belong to isolated, frozen clusters, an exponentially small fractions of solutions belong to clusters that are exponentially large; strikingly, the authors observed that learning algorithms find solutions in these rare clusters.  Specifically, the solutions that contribute to  the dominant portion of the partition function (number of solutions) and determine the equilibrium properties of the model are completely distinct from those solutions that efficient algorithms find.   This work was followed by the proposal of several different algorithms to target these subdominant clusters~\cite{baldassi2016local,baldassi2016unreasonable}.  

 In~\cite{aubin2019storage}, a \textit{symmetric Ising perceptron}, with activation function $\phi(x) = |x|$, was studied as a model conjectured to exhibit the same structural and algorithmic  properties, but more amenable to mathematical analysis.  Baldassi, Della Vecchia, Lucibello, and Zecchina~\cite{baldassi2019clustering} confirmed that on the level of the physics predictions this model has the same qualitative behavior as the  Ising perceptron with activation function $\phi(x) =x$.

In summary, the Ising perceptron, and  its symmetric variants, are conjectured to exhibit `frozen $1$-RSB behavior' at all positive densities below the critical capacity density.  The current understanding of the link between clustering, freezing, and the performance of algorithms would suggest that finding a solution in these models is therefore intractable . This, however, is seemingly in contradiction with empirical observations, and one hypothesis suggests that learning algorithms find exponentially rare solutions with atypical structural properties. 

Resolving these questions is a pressing problem since the hypothesis about subdominant clusters calls into question the link between the equilibrium properties of these models and algorithmic tractability.  In this work, we take a first step in addressing this problem rigorously  by establishing the frozen $1$-RSB picture for the symmetric Ising perceptron (Theorem~\ref{lem:strong-clustering} below), under an assumption on the critical points of a real-valued function (Assumption~\ref{assumption1}).  

\subsection{Previous results}
There are few rigorous results on the Ising perceptron, and most are concerned with bounds on the critical capacity.

For the classic Ising perceptron Krauth and M{\'e}zard~\cite{KM89} predicted, using the replica method, that $\alpha_c(\kappa)  = \alpha_{\mathrm{KM}}(\kappa)$ for a complicated but explicit function $\alpha_{\mathrm{KM}}$ (with $\alpha_{\mathrm{KM}}(0) \approx.83$).  Following some previous bounds of Kim and Roche and Talagrand~\cite{kim1998covering,talagrand1999intersecting}, Ding and Sun~\cite{ding2019capacity} recently proved that $\alpha_c (\kappa) \ge \alpha_{\mathrm{KM}}(\kappa)$ using a sophisticated form of the second-moment method  guided by the Thouless--Anderson--Palmer (TAP) equations~\cite{thouless1977solution}.   Their result assumes a technical condition on a certain real-valued function,  akin to Assumption~\ref{assumption1} below. 

Much of the technical difficulty of~\cite{ding2019capacity}  comes from the asymmetry inherent in the activation function $\phi(x) =x$; this  necessitates a conditioning argument and the sophisticated second-moment calculation.  On the other hand, Aubin, Perkins, and Zdeborov{\'a}~\cite{aubin2019storage} considered two symmetric activation functions: $\phi_r(x) = |x|$ and $\phi_u(x) = - |x|$, which they called the rectangular and `u' activation functions respectively.  Studying symmetric constraints has a long history in the random CSP literature: the random $k$-NAE-SAT model is a symmetric variant of the random $k$-SAT model.  While the qualitative properties of the two models are expected to be very similar, the symmetric model is often more amenable to rigorous analysis, and thus a clearer understanding can be obtained  (see e.g.,~\cite{ding2014satisfiability,sly2016number,bartha2019breaking} for recent work on the $k$-NAE-SAT model).  Studying symmetric perceptrons allows us to prove stronger and more detailed results than are currently attainable for the classic perceptron, but the phenomena studied are expected to be universal.  

Aubin, Perkins, and Zdeborov{\'a}~\cite{aubin2019storage} determine the critical capacity density for the  symmetric perceptron with rectangular activation function:
\begin{equation}
\alpha_{r,c} (\kappa) =  -\frac{\log 2}{\log p(\kappa)}
\end{equation}
where $p(\kappa) = \Pr [ |Z | \le \kappa]$ for a standard Gaussian random variable $Z$.  This result, like that of~\cite{ding2019capacity}, is contingent on an assumption about a certain real-valued function.  This function will also prove useful in our work.  Let  $H(\beta) = - \beta \log \beta - (1 - \beta) \log (1 - \beta)$ be the Shannon entropy function (all logarithms in this paper are base $e$) and let 
\begin{equation}
\label{eqQk}
q_{\kappa}(\beta) = \Pr( |Z_1| \le \kappa, |Z_2| \le \kappa) 
\end{equation}
 where $(Z_1,Z_2)$ is a jointly Gaussian vector with means $0$, variances $1$, and  covariance $2 \beta-1$. 
\begin{assumption}[\cite{aubin2019storage}]
\label{assumption1}
The function 
\begin{equation}
\label{eqFdef}
F_{\alpha}(\beta) = H(\beta) + \alpha \log q_{\kappa}(\beta) \, ,
\end{equation}
 has a single critical point for $\beta \in (0,1/2)$ whenever $F_{\alpha}''(1/2)<0$. 
\end{assumption}

\begin{remark}
We (and the authors of~\cite{aubin2019storage}) have plotted $F_{\alpha}(\beta)$ for many choices of $\alpha$ and $\kappa$ and in all instances the function has the shape depicted in Figure~\ref{figure1}, consistent with Assumption~\ref{assumption1}.  We believe a proof of the assumption might be possible adapting the methods  of~\cite[Proof of Lemma 3]{achlioptas2002asymptotic}.
\end{remark}

  Xu~\cite{xu2019sharp}  proved a general sharp threshold result (an analogue of Friedgut's sharp threshold result for random graphs and CSP's~\cite{friedgut1999sharp}), which, combined with~\cite{aubin2019storage}, gives a sharp threshold for the existence of solutions: for any $\eps>0$, 
\begin{align*}
\Pr[ S_{\phi_r, \kappa, N (\alpha_{r,c} +\eps)N} = \emptyset] &\to 1  \text{ as  } N \to \infty \\
\Pr[ S_{\phi_r, \kappa, N (\alpha_{r,c} -\eps)N} = \emptyset] &\to 0  \text{ as  } N \to \infty  \,.
\end{align*}
Both statements hold also for the $u$-function in a range of $\kappa$ values, and the results of~\cite{ding2019capacity,xu2019sharp} prove that the second statement holds for the classic perceptron.   Proving the matching  upper bound on the critical capacity for the classic perceptron remains a challenging open problem.

Baldassi, Della Vecchia, Lucibello, and Zecchina~\cite{baldassi2019clustering} used the second-moment method to show the existence of pairs of solutions at arbitrary distances in the symmetric Ising perceptron.

\subsection{Main results}

We will study properties of typical solutions in the {symmetric Ising perceptron} (with the rectangular activation function $\phi(x) = |x|$).   We now specialize and simplify the  notation from above.

For  $X \in \R^N$ and $\kappa > 0$, define
\begin{equation}
	H_{\kappa}(X) := \{\sigma \in \Sigma_N: |\langle X,\sigma \rangle| \leq \kappa \sqrt{N}\}\,.
\end{equation}	
Let $\{X_i\}_{i \geq 1}$ be a sequence of i.i.d.\ $N$-dimensional standard Gaussians, and define the solution space
\begin{equation}
\label{eq:def-S-alpha}
	S = S_{\alpha} (N)  = \bigcap_{i = 1}^{\lfloor \alpha N \rfloor}H_{\kappa}(X_i)\,.
\end{equation}
The critical capacity density, determined in~\cite{aubin2019storage}, is $\alpha_c = \alpha_c(\kappa)= -\log2/\log p(\kappa)$.

The main result of this paper confirms the frozen $1$-RSB scenario in the symmetric Ising perceptron: typical solutions are completely frozen with high probability for $\alpha< \alpha_c$.    
Let
 \begin{equation}
\label{eq:def-dc}
\beta_c = \beta_c(\kappa, \alpha) =  \beta \in (0,1/2) : F(\beta) - \alpha \log p(\kappa) =0  \,.
\end{equation}
See Figure~\ref{figure1} for a depiction of $\beta_c$. We show in Lemma~\ref{lemDc} that for $\alpha < \alpha_c$, $\beta_c >0$ exists and is uniquely defined.

\begin{figure}[htbp]
\begin{center}
\includegraphics[scale=.45]{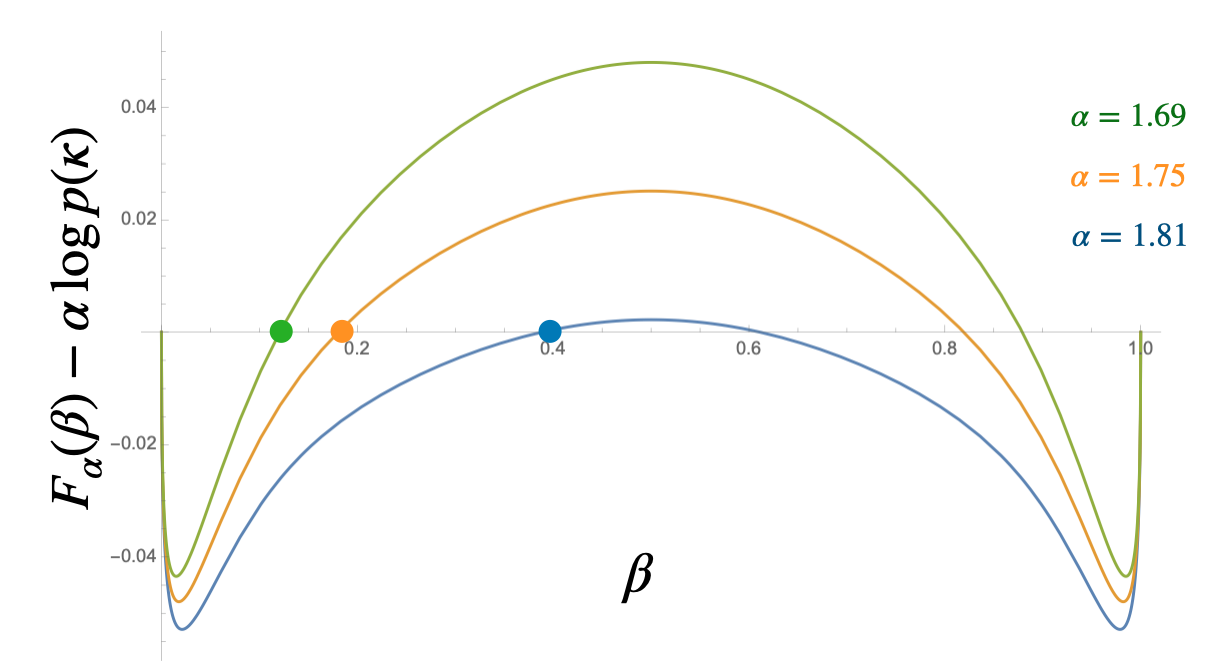}
\caption{$F_{\alpha}(\beta) - \alpha p(\kappa)$ plotted for $\alpha = \{ 1.69, 1.75, 1.81\}$  and $\kappa=1$. The dots mark $\beta_c$ for the three values of $\alpha$.}
\label{figure1}
\end{center}
\end{figure}

\begin{theorem}
\label{lem:strong-clustering}
Let $\kappa >0$ and  $\alpha< \alpha_c(\kappa)$.  Let $\sigma$ be uniformly sampled from $S$ conditioned on the event $S \ne \emptyset$.  Under Assumption~\ref{assumption1}, for any $\delta \in (0,\beta_c)$, 
	\begin{equation}
		\{ \sigma' \in S : \mathrm{dist} (\sigma, \sigma' ) \leq ( \beta_c - \delta)N \}  = \{\sigma\}\,.
	\end{equation}
	with probability $1-o(1)$ as as $N \to \infty$.
	In particular, $\sigma$ is completely frozen with probability $1-o(1)$. 
\end{theorem}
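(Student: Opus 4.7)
The plan is to compare the null model (in which $\sigma$ is drawn uniformly from $S$ conditioned on $S\neq\emptyset$) to a \textbf{planted model} $\mathbb{P}^{\pl}$ obtained by first sampling $\sigma^\star\in\Sigma_N$ uniformly and then sampling the $X_i$ i.i.d.\ from the standard Gaussian law conditioned on $|\langle X_i,\sigma^\star\rangle|\le \kappa\sqrt{N}$. For any event $A=A(\sigma,\bX)$ one has the identities
\begin{equation}
\mathbb{P}^{\pl}[A]=\mathbb{E}\!\left[\frac{\sum_{\sigma^\star\in S}\mathbf{1}_{A(\sigma^\star,\bX)}}{2^N p(\kappa)^m}\right],\qquad
\mathbb{P}^{\mathrm{null}}[A]=\frac{1}{\mathbb{P}[S\neq\emptyset]}\mathbb{E}\!\left[\frac{\mathbf{1}_{S\neq\emptyset}\sum_{\sigma\in S}\mathbf{1}_{A(\sigma,\bX)}}{|S|}\right].
\end{equation}
The paper's main technical input, namely concentration of $\log|S|$ around its annealed value $N\log 2+m\log p(\kappa)$, implies that $|S|\ge 2^N p(\kappa)^m e^{-o(N)}$ with probability $1-o(1)$. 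Combined with $\mathbb{P}[S\neq\emptyset]\to 1$ for $\alpha<\alpha_c$, this yields $\mathbb{P}^{\mathrm{null}}[A]\le e^{o(N)}\mathbb{P}^{\pl}[A]+o(1)$. It therefore suffices to show that the planted expected number of solutions $\sigma'\neq\sigma^\star$ with $\mathrm{dist}(\sigma^\star,\sigma')\le(\beta_c-\delta)N$ is $o(1)$.

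Fix $\sigma^\star$ and a candidate $\sigma'$ with $\mathrm{dist}(\sigma^\star,\sigma')=\beta N$. The pair $\bigl(\langle X_i,\sigma^\star\rangle/\sqrt{N},\langle X_i,\sigma'\rangle/\sqrt{N}\bigr)$ is bivariate standard Gaussian with covariance $1-2\beta$, so using the symmetry $q_\kappa(\beta)=q_\kappa(1-\beta)$ the conditional probability that $\sigma'\in S$ given $\sigma^\star\in S$ equals $(q_\kappa(\beta)/p(\kappa))^m$, independently across the $m$ constraints. Summing over $\sigma'$ and applying Stirling,
\begin{equation}
\mathbb{E}^{\pl}\!\left[\#\{\sigma'\neq\sigma^\star\in S:\mathrm{dist}\le(\beta_c-\delta)N\}\right]=\sum_{k=1}^{\lfloor(\beta_c-\delta)N\rfloor}\binom{N}{k}\!\left(\frac{q_\kappa(k/N)}{p(\kappa)}\right)^{\!m}\le\sum_{k}\exp\!\Bigl(N\bigl[F_\alpha(k/N)-\alpha\log p(\kappa)\bigr]+o(N)\Bigr),
\end{equation}
with the $o(N)$ uniform for $k/N$ bounded away from $0$.

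To conclude, one must verify that $F_\alpha(\beta)-\alpha\log p(\kappa)<0$ strictly on $(0,\beta_c)$. Since $F_\alpha(0)=\alpha\log p(\kappa)=F_\alpha(\beta_c)$ and Assumption~\ref{assumption1} grants a single interior critical point of $F_\alpha$ in $(0,1/2)$, that point must be a strict minimum lying below the common endpoint value. Hence the summands decay uniformly as $e^{-\Omega(N)}$ for $k/N\in[\delta',\beta_c-\delta]$; the small-$k$ regime $k=o(N)$ is handled separately using the quantitative estimate $-\log(q_\kappa(\beta)/p(\kappa))=\Omega(\sqrt{\beta})$ as $\beta\to 0$, which dominates $\log\binom{N}{k}\le k\log(eN/k)$. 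Markov's inequality and the planted-to-null comparison then give the theorem, and complete freezing in particular by taking $\delta$ small enough that $(\beta_c-\delta)N\ge 1$. The principal obstacle is the concentration of $\log|S|$ powering the planted comparison, which is the paper's main technical contribution; a secondary technicality is the small-$\beta$ estimate on $q_\kappa$ and verifying that the hypotheses of Assumption~\ref{assumption1} apply throughout the regime $\alpha<\alpha_c$.
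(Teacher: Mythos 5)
Your proposal follows essentially the same route as the paper: compare the random model to the planted model via the identity relating the two laws, use the concentration of $\log|S|$ to make the comparison effective, and bound the planted first moment of near-solutions by splitting into a regime $k/N$ bounded away from $0$ (controlled by the sign of $F_\alpha(\beta)-\alpha\log p(\kappa)$ under Assumption~\ref{assumption1}) and a small-$k$ regime (controlled by the $\Omega(\sqrt{\beta})$ estimate on $-\log(q_\kappa(\beta)/p(\kappa))$). These are precisely the paper's Lemmas~\ref{1-dis-con}, \ref{Boundary}, \ref{pl-cluster}, \ref{17}, and \ref{contiguity}.

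There is one genuine gap in how you state the planted-to-null comparison. You claim that the concentration of $\log|S|$ gives $|S|\ge 2^Np(\kappa)^m e^{-o(N)}$ whp, and hence $\P^{\mathrm{null}}[A]\le e^{o(N)}\P^{\pl}[A]+o(1)$. But the planted first moment is only $O(e^{-c\sqrt{N}})$ --- your own small-$k$ estimate shows the sum is dominated by $k=1$, giving $\exp(-\Theta(\sqrt{N}))$, not $e^{-\Omega(N)}$. The product $e^{o(N)}\cdot e^{-c\sqrt{N}}$ need not vanish (take the $o(N)$ to be, say, $N^{3/4}$). To close the argument you need the sharper concentration the paper actually proves, namely $\log|S|=\log\E|S|+O_{\P}(\log N)$ (Theorem~\ref{PartitionF}), which gives a comparison factor $N^{O(1)}$ (more precisely $N^{m_N}$ for any $m_N\to\infty$ slowly, as in Lemma~\ref{contiguity}); this is enough because $N^{O(1)}e^{-c\sqrt{N}}\to 0$. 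You implicitly lean on this stronger concentration since you reference it as the paper's main technical input, but the $e^{o(N)}$ version of the comparison you actually write down is too weak to conclude.
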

Note that $\sigma$ is selected according to two sources of randomness: the randomness of the perceptron instance $\mathbf X$ and the random choice of $\sigma$ from $S$.  %Theorem~\ref{lem:strong-clustering} proves~\cite[Conjecture 9]{aubin2019storage}.

The next result shows that the logarithm of the number of number of solutions in the rectangular Ising perceptron is tightly concentrated below the critical density. 
\begin{theorem}
\label{PartitionF}
Under Assumption~\ref{assumption1}, for $\alpha< \alpha_c$
	\begin{equation}
		\frac{\log |S|}{N}   =  \log 2+ \alpha \log  p(\kappa)  + O_{\P}\Big(\frac{\log N}{N}\Big)\quad \text{as } N\to \infty\text{~in~probability}\,. 
	\end{equation}
	 In particular, for $\alpha < \alpha_c$, $S$ is non-empty with probability $1-o(1)$. 
\end{theorem}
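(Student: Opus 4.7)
The plan is to combine a second-moment calculation with an inductive exposure of the constraints in order to show that $|S|$ is concentrated around its first moment $\E[|S|] = 2^N p(\kappa)^{\lfloor\alpha N\rfloor}$ to within a polynomial factor. The first-moment identity gives $\log\E[|S|]/N = \log 2 + \alpha\log p(\kappa)$, which already identifies the target asymptotic. Since $\P[|S|\ge N\cdot\E[|S|]]\le 1/N$ by Markov's inequality, the upper bound on $\log|S|/N$ is immediate; the content of the theorem lies in the matching lower bound.

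For the second moment, observe that for $\sigma,\sigma'\in\Sigma_N$ at normalized Hamming distance $\beta$, the pair $(\langle X,\sigma\rangle,\langle X,\sigma'\rangle)/\sqrt{N}$ is centered jointly Gaussian with unit variances and covariance $1-2\beta$, so the single-constraint joint satisfaction probability equals $q_\kappa(\beta)$. Hence
\begin{equation}
\E[|S|^2] \;=\; 2^N \sum_{k=0}^N \binom{N}{k}\, q_\kappa(k/N)^{\lfloor\alpha N\rfloor},
\end{equation}
whose exponential rate equals $\log 2 + \max_{\beta\in[0,1]} F_\alpha(\beta)$. Under Assumption~\ref{assumption1}, supplemented by a direct check that $F_\alpha''(1/2)<0$ for $\alpha<\alpha_c$, the function $F_\alpha$ attains its global maximum at the symmetric point $\beta=1/2$, with $F_\alpha(1/2)=\log 2+2\alpha\log p(\kappa)$. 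The Laplace method then yields $\E[|S|^2]=O((\E[|S|])^2)$, and the Paley--Zygmund inequality delivers $|S|\ge \E[|S|]/C$ with constant probability.

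To upgrade to probability $1-o(1)$ and obtain the $O_{\P}(\log N/N)$ quantitative error, I will run an induction that exposes the constraints one at a time. Let $Z_m = |S_m|$ and $\cF_m = \sigma(X_1,\dots,X_m)$; writing $Z_m = \sum_{\sigma\in S_{m-1}}\1\{\sigma\in H_\kappa(X_m)\}$ and conditioning on $\cF_{m-1}$ yields $\E[Z_m\mid\cF_{m-1}]=p(\kappa)Z_{m-1}$ and conditional variance $\sum_{\sigma,\sigma'\in S_{m-1}}(q_\kappa(\beta_{\sigma,\sigma'})-p(\kappa)^2)$. Since $q_\kappa(\beta)-p(\kappa)^2 = O((\beta-1/2)^2)$ near $\beta=1/2$, this variance is small provided most pairs in $S_{m-1}$ have normalized distance concentrated at $1/2$ on the Gaussian scale $N^{-1/2}$. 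The plan is therefore to carry this \emph{geometric invariant} on $S_m$ through the induction, using the second-moment calculation at step $m$ to justify it; under the invariant, each increment satisfies $Z_m/(p(\kappa)Z_{m-1})=1+O_{\P}(N^{-1/2})$, and aggregating the logarithmic increments across $\lfloor\alpha N\rfloor$ steps yields the stated concentration after absorbing per-step errors into the $\log N$ budget.

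The main obstacle is propagating the geometric invariant through the induction. After conditioning on $\cF_{m-1}$, the set $S_{m-1}$ is not a uniformly random subset of $\Sigma_N$, and one must verify that its pair-distance profile nonetheless matches that of a uniformly random subset of the same size closely enough to control the conditional variance at step $m$. This requires tight second-moment control on pair counts at every step together with Gaussian concentration for the indicator sums $\sum_{\sigma\in S_{m-1}}\1\{\sigma\in H_\kappa(X_m)\}$, arranged so that errors do not degrade as $m$ approaches $\lfloor\alpha N\rfloor$.
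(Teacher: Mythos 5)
Your high-level skeleton matches the paper's: expose constraints one at a time, track a near-orthogonality invariant for the current solution set, exploit the martingale structure of $\sum Y_i$ with $Y_i$ essentially the relative increment, and use second-moment control to verify the invariant. The first-moment identity and the use of $\E[|S_t|^2]\le C\,\E[|S_t|]^2$ (which the paper imports from Aubin--Perkins--Zdeborov\'a) are also as you describe.

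However, the engine you propose for the per-step increment control is a genuine gap. You derive the per-step bound from a conditional-\emph{variance} estimate on $Z_m=\sum_{\sigma\in S_{m-1}}\1\{\sigma\in H_\kappa(X_m)\}$, which only yields a Chebyshev-type (polynomial) tail for $Y_m=(Z_m/Z_{m-1}-p(\kappa))/p(\kappa)$. That is not strong enough for two of the steps you need. First, the Taylor expansion $\log(1+Y_m)=Y_m-Y_m^2/2+o(1)$ requires a uniform bound $\max_{m\le\alpha N}|Y_m|\le 1/2$ (say), and a per-step variance of order $\log N/N$ gives via Chebyshev only $\P(|Y_m|>1/2)\lesssim \log N/N$, which fails the union bound over $\alpha N$ steps. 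Second, to run the stopping-time/regularity propagation you will again need to union bound a per-step deviation event over $\alpha N$ steps with total probability $o(1)$. The paper's Lemma~\ref{Xttail} solves both problems by proving an \emph{exponential} tail $\P_t(|Y_{t+1}|\ge C\sqrt{(|Q_t|+\log N)/N}\,x)\le Ce^{-x}$, and the proof of the underlying Lemma~\ref{LD} is not a variance argument at all: it constructs $k\approx\sqrt{N}$ mutually near-orthogonal samples from $A$ (Lemma~\ref{kpt}), couples them with a Gaussian correlation inequality for weakly correlated coordinates (Lemma~\ref{gaussian}), and extracts the exponential tail from the multiplicativity in $k$. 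Your appeal to ``Gaussian concentration for the indicator sums'' does not supply this, since $\1\{\sigma\in H_\kappa(X_m)\}$ is not a Lipschitz function of $X_m$, and the paper does not use Lipschitz concentration here. Until you replace the Chebyshev step with a mechanism giving exponential (or at least super-polynomial) tails on $Y_m$, the aggregation across $\Theta(N)$ steps does not close.

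A smaller point: you assert that Assumption~\ref{assumption1} plus a direct check of $F_\alpha''(1/2)<0$ places the global maximum of $F_\alpha$ at $\beta=1/2$ for all $\alpha<\alpha_c$. This is plausible but you should justify why $F_\alpha''(1/2)<0$ holds throughout $(0,\alpha_c)$, or restructure so you do not need it; the paper sidesteps this by only invoking the second-moment bound $\E|S_t|^2\le C\,\E|S_t|^2$ as a black box inside Lemma~\ref{Exp-R}, rather than maximizing $F_\alpha$ directly.
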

The second statement of the Theorem~\ref{PartitionF} proves that the symmetric Ising perceptron undergoes a sharp satisfiability phase transition at $\alpha_c $, answering an open question from~\cite{aubin2019storage} (where the complementary statement that for $\alpha > \alpha_c$, $S = \emptyset$ with high probability is proved).  This could also be proved by adapting the sharp threshold result of~\cite{xu2019sharp} to the symmetric perceptron.

\subsection{Overview of the techniques}

We study the properties of a typical solution drawn from $S$ by way of the \textit{planted model}: the experiment of first  selecting a uniformly random solution from $\Sigma_N$, then choosing a random configuration of constraints consistent with this solution.    Planted models have been studied extensively in the random CSP literature and beyond.   They are used as a toy model for statistical inference: e.g. the `teacher--student model'~\cite{zdeborova2016statistical} or the stochastic block model~\cite{abbe2017community}.  They are used to understand the condensation threshold in random CSP's~\cite{coja2012condensation,bapst2016condensation,coja2018information,coja2018charting}.  They are used to understand the structure of the solution space in random CSP's~\cite{achlioptas2008algorithmic,achlioptas2011solution,montanari2011reconstruction,molloy2018freezing}.

Given $N, m \in \mathbb N$, $\kappa >0$, and following~\cite{achlioptas2008algorithmic}, we define two probability distributions on pairs $(\sigma^*, \mathbf X) \in \Sigma_N \times (\R^N)^m$ of solutions and configurations of  $m$ constraint vectors.  

\medskip

In the \textbf{random model} we:
\begin{enumerate}
\item Sample $m$ i.i.d.\ $N$-dimensional standard Gaussian constraint vectors $\mathbf X = (X_1, \dots , X_m)$, conditioned on the event that $S (\mathbf X) = \bigcap_{i=1}^m H_{\kappa}(X_i) \ne \emptyset$.
\item Sample $\sigma^*$ uniformly at random from $S$. 
\end{enumerate}
We denote the law of the random model with $\P_{\mathrm{r}}, \E_{\mathrm{r}}$ to distinguish the law from both the unconditional perceptron model and the planted model below.   The random model is simply the experiment of selecting a uniformly random solution from the symmetric Ising perceptron conditioned on satisfiability.  

\medskip

In the \textbf{planted model} we:
\begin{enumerate}
\item Sample $\sigma^*$ uniformly at random from $\Sigma_N$. 
\item Sample a configuration of  $m$ i.i.d.\ constraint vectors $\mathbf X = (X_1, \dots X_m)$, with each $X_i$ distributed as a standard $N$-dimensional Gaussian vector conditioned on the event  that $\sigma^* \in H_{\kappa}(X_{i})$.
\end{enumerate}
We denote the law of the planted model with $\P_{\mathrm{pl}}, \E_{\mathrm{pl}}$.

The key to using the planted model to understand the original model is to show that at low enough constraint densities, the two distributions on $(\sigma^*, \mathbf X)$ are close.  Proving that the distributions are close,  as we do below in Lemma~\ref{contiguity},  amounts to proving that the number of solutions, $|S|$, is typically not too far from its expectation, $\E |S|$.  The better concentration of $|S|$ one can prove, the more one can deduce about the original model from the planted model.   In~\cite{achlioptas2008algorithmic} it is shown (in the case of $q$-colorings of a random graph) that if $\log |S| = \log \E |S| + o(N)$, then events that occur with probability at most $\exp( - \theta(N))$ in the planted model occur with probability $o(1)$ as $N \to \infty$ in the random model.   This notion of closeness is `quiet planting'~\cite{krzakala2009hiding} and it suffices to prove some structural results on the solution spaces such as clustering~\cite{achlioptas2008algorithmic}.  On the other hand, much stronger notions of closeness have been proved: `silent planting'~\cite{bapst2017planting} which implies the two distributions are mutually  contiguous: any event with probability $o(1)$ in the planted model has probability $o(1)$ in the random model.  This has been used to prove stronger structural results~\cite{molloy2018freezing}.  Proving contiguity requires much stronger concentration of $\log |S|$.  A very general result on the contiguity of the planted and random model for symmetric CSP's~\cite{coja2018charting} involves a rigorous implementation of the cavity method and the small subgraph conditioning method.  In the setting of the perceptron, neither of these tools exist and so we must prove concetration via another route.  

We  prove Theorem~\ref{lem:strong-clustering} in three steps.

In Section~\ref{secPlanted}, we prove that the planted solution is isolated and the next nearest solution is at linear Hamming distance with high probability in the planted model (Lemma~\ref{pl-cluster}).   

In Section~\ref{secConcentrate} we prove Theorem~\ref{PartitionF}, showing that for $\alpha< \alpha_c$, the logarithm of the number of solutions in the random model is concentrated around the logarithm of the expected number of solutions.  To the best of our knowledge, our approach to proving concentration in this way is new, and we expect the technique to have further applications.

In Section~\ref{secQuiet} we transfer our results about the planted model to the random model by showing that events that occur with probability at most $N^{-\omega(1)}$ in the planted model occur with probability $o(1)$ in the standard model (Lemma~\ref{contiguity}).  This relies on the concentration properties of the logarithm of the number of solutions.

\subsection{Extensions and future work}
Both Theorem~\ref{lem:strong-clustering} and Theorem~\ref{PartitionF} can be extended verbatim to the $u$-function Ising perceptron studied in~\cite{aubin2019storage}, for $\kappa \in (0, .817) $ (the same range of $\kappa$ for which the second-moment method  works there).

The main open problem in this area is to resolve the conceptual dilemma  described in Section~\ref{secStructure} and answer the questions raised in~\cite{baldassi2015subdominant}.  Are there efficient learning algorithms that always find out-of-equilibrium solutions in subdominant clusters?  This would raise a serious challenge to the belief that an understanding of the associated equilibrium statistical mechanics model (on the level of the free energy) can explain computational tractability or intractability.  

  Concretely, now that we have verified the frozen $1$-RSB scenario, we can ask for provably efficient learning algorithms for the symmetric Ising perceptron.
\begin{question}
Is there a polynomial-time algorithm that, with probability $1-o(1)$,   finds a solution to the symmetric Ising perceptron for some  density $\alpha\in (0, \alpha_c)$?
\end{question}

Alternatively, one could leverage the structure results we have proved here and rule out some class of learning algorithms.

\medskip

We leave the following additional open problems for future work.

\medskip

1.  {Prove that the classic perceptron with activation function $\phi(x)= x$ exhibits the frozen $1$-RSB property.}    It is not at all clear how to extend the method of this paper to this case.  As discussed in~\cite[Sec.\ 2.4]{coja2018charting}, for asymmetric random CSP's, like the random $k$-SAT model, the planted model, at least in its straightforward implementation, is not useful to compare to the random model (in particular it is not contiguous with the random model at any positive density).  We expect the same with the classic perceptron, and so our strategy of arguing via the planted model will not work.

2. {Prove full contiguity between the random and planted models for $\alpha<\alpha_c$.}  Our comparison result (Lemma~\ref{contiguity}) suffices for our purposes here, but it is natural to ask for more (as is the case for a large class of symmetric random CSP's~\cite{coja2018charting}).
\begin{conjecture}
For $\alpha< \alpha_c$, the random and planted models of the symmetric Ising perceptron are mutually contiguous.  That is,  if $\P_{\pl}(A) = o(1)$ then $\P_{\mathrm{r}}(A) = o(1)$ and vice versa. 
\end{conjecture}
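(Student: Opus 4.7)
The plan is to prove mutual contiguity via a careful analysis of the Radon--Nikodym derivative between $\P_{\pl}$ and $\P_{\mathrm{r}}$. A direct computation from the definitions of the two measures gives
\begin{equation}
\frac{d\P_{\pl}}{d\P_{\mathrm{r}}}(\sigma,\mathbf X) = \frac{|S(\mathbf X)|\,\P[S\ne\emptyset]}{\E|S|},
\end{equation}
so the two halves of the conjecture reduce to showing that $|S|/\E|S|$ is tight in probability under $\P_{\mathrm{r}}$ (controlling the upper tail of this density) and that $\E|S|/|S|$ is uniformly integrable under $\P_{\pl}$ (controlling the upper tail of the reciprocal density).

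The easier direction, $\P_{\mathrm{r}}(A)=o(1)\Rightarrow \P_{\pl}(A)=o(1)$, follows from the trivial bound $\E_{\mathrm{r}}[|S|/\E|S|]=1$. For any $\eps>0$, Markov's inequality produces $M=M(\eps)$ with $\P_{\mathrm{r}}(|S|/\E|S|>M)<\eps$, and the change of measure then yields $\P_{\pl}(A)\le M\,\P_{\mathrm{r}}(A)+\eps$. Sending $N\to\infty$ and then $\eps\to 0$ gives the claim; notably, this half of the argument uses neither Assumption~\ref{assumption1} nor Theorem~\ref{PartitionF}.

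For the converse, a parallel computation yields $\E_{\pl}[\E|S|/|S|]=\P[S\ne\emptyset]\le 1$, so $\E|S|/|S|$ is already tight under $\P_{\pl}$. Tightness alone is not enough, however: to conclude $\P_{\pl}(A)=o(1)\Rightarrow \P_{\mathrm{r}}(A)=o(1)$ one needs $\{\E|S|/|S|\}_N$ to be uniformly integrable under $\P_{\pl}$. A convenient sufficient criterion is $\sup_N \E_{\pl}[(\E|S|/|S|)^p]<\infty$ for some $p>1$, and a direct change-of-measure calculation shows this is equivalent to
\begin{equation}
\sup_N \E_{\mathrm{r}}\big[(\E|S|/|S|)^{p-1}\big]<\infty.
\end{equation}
This is strictly stronger than the bound of Theorem~\ref{PartitionF}, which only gives $\E|S|/|S|=O_{\P}(N^{C})$ under $\P_{\mathrm{r}}$; uniform integrability requires super-polynomial decay of the lower tail of $|S|/\E|S|$.

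The main obstacle is therefore to upgrade the concentration estimate behind Theorem~\ref{PartitionF} from polynomial to at least stretched-exponential control of the lower tail of $|S|$. One natural route is to iterate the induction of Section~\ref{secConcentrate} under a quantitatively sharper hypothesis, exploiting Gaussian concentration for each increment $Z_k/Z_{k-1}$ together with the near-independence of the indicators $\{\mathbf{1}[\sigma\in H_{\kappa}(X_k)]\}_{\sigma\in S_{k-1}}$ that follows from the pair-distance distribution of $S_{k-1}$ being sharply concentrated near $\beta=1/2$ under Assumption~\ref{assumption1}. A more ambitious alternative is to adapt an Aizenman--Sims--Starr or Guerra--Toninelli interpolation argument to the perceptron, establishing tightness---or even convergence---of $|S|/\E|S|$ under $\P_{\mathrm{r}}$ directly; the absence of such tools for the perceptron is precisely the reason the authors flag this result only as a conjecture.
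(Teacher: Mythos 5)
The statement you are addressing is explicitly left as a conjecture in the paper; there is no proof to compare against, so I will assess your proposal on its own merits. You correctly identify the Radon--Nikodym derivative (this is exactly Lemma~\ref{17} of the paper), and your diagnosis of the converse direction as the real obstruction is accurate: one-sided polynomial concentration of $\log|S|$ (Theorem~\ref{PartitionF}) gives only the weaker transfer Lemma~\ref{contiguity}, and full contiguity would require uniform integrability of $\E|S|/|S|$ under $\P_{\pl}$, hence a super-polynomial lower-tail estimate for $|S|/\E|S|$. You are right that the paper's techniques do not supply this, and that some new input (stronger iterated induction, or an interpolation argument) would be needed.

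However, there is a genuine gap in your ``easier direction'' $\P_{\mathrm{r}}(A)=o(1)\Rightarrow\P_{\pl}(A)=o(1)$. You argue from $\E_{\mathrm{r}}[|S|/\E|S|]=O(1)$ and Markov's inequality that for any $\eps>0$ there is $M$ with $\P_{\mathrm{r}}(|S|/\E|S|>M)<\eps$, and then assert $\P_{\pl}(A)\le M\,\P_{\mathrm{r}}(A)+\eps$. But the truncation error is
\begin{equation}
\E_{\mathrm{r}}\bigl[\tfrac{|S|}{\E|S|}\,\1_{\{|S|/\E|S|>M\}}\bigr]\,,
\end{equation}
and this is not bounded by $\P_{\mathrm{r}}(|S|/\E|S|>M)$; a first-moment bound plus Markov gives tightness of the density, not uniform integrability, and the expectation in the tail can remain $\Theta(1)$ as $M\to\infty$ (consider a random variable equal to $2N$ with probability $1/N$ and $0$ otherwise). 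To close this you need a strictly stronger moment bound, e.g.\ the second-moment estimate $\E|S|^2\le C(\E|S|)^2$ from~\cite{aubin2019storage}, which the paper already uses (cf.\ the proof of Lemma~\ref{Exp-R}); with it, Cauchy--Schwarz or a direct Markov bound on the second moment gives the tail expectation is $O(1/M)$, and your argument goes through. So this direction is salvageable, but your write-up as stated skips the uniform-integrability issue that is precisely the subtle point distinguishing quiet planting from contiguity.
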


\section{The planted model}
\label{secPlanted}

Consider the planted model with planted solution $\sigma^*$ and  constraints  $\mathbf X =(X_1, X_2, \dots, X_m) $.  Define $S (\mathbf X) = \bigcap_{i \geq 1}^{\lfloor \alpha N \rfloor}H_{\kappa}(X_{i})$ as in \eqref{eq:def-S-alpha}.  Recall the definition of $\beta_c$ from~\eqref{eq:def-dc}. 
We  show that $\beta_c$ exists and is unique for $\alpha \in (0, \alpha_c)$.
\begin{lemma}
\label{lemDc}
Under Assumption~\ref{assumption1}, for $\alpha \in (0, \alpha_c)$, there exists a unique $\beta \in (0,1/2)$ so that 
\[ F_{\alpha}(\beta) - \alpha \log p(\kappa) = 0 \,. \]
Also, for any $\delta \in (0,\beta_c/2)$,
\begin{equation}
\label{eq:G<0}
	\sup_{\delta < \beta < \beta_c - \delta}F_{\alpha}(\beta) - \alpha \log p(\kappa) < 0\,.
\end{equation}
\end{lemma}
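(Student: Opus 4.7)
The plan is to pin down the shape of $F_{\alpha}$ on $(0, 1/2)$ using Assumption~\ref{assumption1} together with boundary-value and boundary-behavior computations, then deduce existence, uniqueness, and the supremum bound from monotonicity and compactness. First I would evaluate the endpoints: at $\beta = 0$ the covariance is $-1$, so $Z_2 = -Z_1$ almost surely and $q_{\kappa}(0) = p(\kappa)$, giving $F_{\alpha}(0) = \alpha \log p(\kappa)$; at $\beta = 1/2$ independence gives $q_{\kappa}(1/2) = p(\kappa)^2$ and $F_{\alpha}(1/2) = \log 2 + 2\alpha \log p(\kappa)$. The definition $\alpha_c = -\log 2 / \log p(\kappa)$ then yields, for $\alpha < \alpha_c$, the strict inequality $F_{\alpha}(1/2) - \alpha \log p(\kappa) = \log 2 + \alpha \log p(\kappa) > 0$.

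Next I would establish that $F_{\alpha}$ is strictly less than $F_{\alpha}(0)$ just to the right of $0$. The key input is the asymptotic $q_{\kappa}(\beta) = p(\kappa) - c\sqrt{\beta} + O(\beta)$ for an explicit $c = c(\kappa) > 0$, obtained by combining the identity $\partial_{\rho}\phi_{\rho}(x,y) = \partial_x \partial_y \phi_{\rho}(x,y)$ for the bivariate Gaussian density with the observation that $\phi_{\rho}(\kappa, -\kappa)$ blows up like $e^{-\kappa^2/2}/(4\pi \sqrt{\beta})$ as $\rho = 2\beta - 1 \to -1^+$, while $\phi_{\rho}(\kappa, \kappa)$ decays exponentially. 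Since $H(\beta) = O(\beta \log(1/\beta)) = o(\sqrt{\beta})$, this forces $F_{\alpha}(\beta) - F_{\alpha}(0) \sim -\alpha c \sqrt{\beta}/p(\kappa) < 0$ for small $\beta > 0$. The symmetry $F_{\alpha}(\beta) = F_{\alpha}(1-\beta)$ gives $F_{\alpha}'(1/2) = 0$, and a direct computation (implicit in the second-moment analysis of~\cite{aubin2019storage}) shows $F_{\alpha}''(1/2) < 0$ for all $\alpha < \alpha_c$, so Assumption~\ref{assumption1} applies and $\beta = 1/2$ is a local maximum. The unique critical point $\beta^* \in (0, 1/2)$ must therefore be a local minimum, and $F_{\alpha}$ is strictly decreasing on $(0, \beta^*)$ and strictly increasing on $(\beta^*, 1/2)$.

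With this shape in hand, strict monotonicity gives $F_{\alpha}(\beta^*) < F_{\alpha}(0) = \alpha \log p(\kappa)$, and the intermediate value theorem applied on $[\beta^*, 1/2]$ (where $F_{\alpha}$ rises strictly from $F_{\alpha}(\beta^*)$ to $F_{\alpha}(1/2) > \alpha \log p(\kappa)$) yields a unique $\beta_c \in (\beta^*, 1/2)$ with $F_{\alpha}(\beta_c) = \alpha \log p(\kappa)$; strict monotonicity further implies $F_{\alpha} < \alpha \log p(\kappa)$ throughout $(0, \beta_c)$. The supremum bound~\eqref{eq:G<0} follows from continuity of $F_{\alpha}$ and compactness of $[\delta, \beta_c - \delta] \subset (0, \beta_c)$. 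The principal technical obstacle I anticipate is the boundary-layer asymptotic for $q_{\kappa}$, which requires careful analysis of the bivariate Gaussian probability in the degenerate limit $\rho \to -1$; everything else reduces to monotonicity and the intermediate value theorem.
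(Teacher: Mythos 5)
Your proof is correct and follows essentially the same route as the paper's: show $G(\beta) := F_\alpha(\beta) - \alpha\log p(\kappa)$ satisfies $G(0)=0$, $G<0$ just to the right of $0$, and $G(1/2)>0$, then get existence from the intermediate value theorem and uniqueness plus \eqref{eq:G<0} from the ``decrease-then-increase'' shape guaranteed by Assumption~\ref{assumption1}. The one place you add content is the boundary-layer asymptotic $q_\kappa(\beta) = p(\kappa) - c\sqrt{\beta} + o(\sqrt\beta)$, which you derive from Plackett's identity $\partial_\rho\phi_\rho=\partial_x\partial_y\phi_\rho$ and the degenerate limit $\rho\to-1^+$; the paper simply cites~\cite{aubin2019storage} for the resulting fact $G'(0)=-\infty$. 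Your derivation is correct (one gets $c = 2e^{-\kappa^2/2}/\pi$), so this is a self-contained replacement for the citation rather than a genuinely different argument, and likewise your explicit observation that $F_\alpha''(1/2)<0$ for $\alpha<\alpha_c$ (needed to trigger the hypothesis of Assumption~\ref{assumption1}) is left implicit in the paper but is an honest prerequisite you were right to flag.
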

\begin{proof}
Let $G(\beta) = F_{\alpha}(\beta) - \alpha \log p(\kappa)$.  Then since $q_\kappa(0) = p(\kappa)$ and $H(0) =0$, we have $G(0)=0$.  As observed in~\cite{aubin2019storage}, $G'(0) = - \infty$ and so for some $\eps>0$, $G(x) < 0$ for $x \in (0, \eps)$.  Moreover, since $\alpha< \alpha_c$, $G(1/2) >0$, and so by continuity there exists $\beta \in (0,1/2)$ with $G(\beta)=0$.  By Assumption~\ref{assumption1} this $\beta$ is unique. In addition, Assumption~\ref{assumption1} implies that $G(\beta)$ is first strictly decreasing and then strictly increasing on $\beta\in(0,1/2)$. This gives  \eqref{eq:G<0}.
\end{proof}

  The following result says the planted solution is completely frozen with high probability in the planted model.% at all densities $\alpha>0$.
\begin{lemma}
\label{pl-cluster}
Under Assumption~\ref{assumption1}, for any $\alpha \in (0, \alpha_c)$ and any $\delta \in (0, \beta_c)$, there exists a constant $c_\delta>0$ such that
				\begin{equation}
		\P_{\pl}\Big(\{ \sigma \in S: \mathrm{dist} ( \sigma, \sigma^*)    \leq (\beta_{c} - \delta)N \} \not = \{\sigma^*\}\Big) \leq \exp\left\{- c_{\delta}\sqrt{N}\right\}\,.
	\end{equation}
\end{lemma}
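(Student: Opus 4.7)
The plan is a straightforward first-moment / union bound argument over the Hamming distance $d = \mathrm{dist}(\sigma, \sigma^*)$. Let $X_d$ denote the number of $\sigma \in S \setminus \{\sigma^*\}$ with $\mathrm{dist}(\sigma, \sigma^*) = d$. Since $\sigma^*\in S$ almost surely under the planted measure, it suffices to show
\[
\sum_{d=1}^{\lfloor (\beta_c - \delta) N\rfloor} \E_{\pl}[X_d] \le \exp(-c_\delta \sqrt N).
\]

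First I would compute $\E_{\pl}[X_d]$. For a fixed $\sigma$ at distance $d = \beta N$ from $\sigma^*$, the pair $(\langle X_i,\sigma^*\rangle/\sqrt N,\ \langle X_i,\sigma\rangle/\sqrt N)$ is centered Gaussian with unit variances and covariance $1-2\beta$. By the $Z_2\to -Z_2$ symmetry, $q_\kappa$ is invariant under this sign change in covariance, so under the planted measure
\[
\P_{\pl}(\sigma \in H_\kappa(X_i)) \;=\; \frac{q_\kappa(\beta)}{p(\kappa)},
\]
and independence across $i$ gives $\E_{\pl}[X_d] = \binom{N}{d}(q_\kappa(\beta)/p(\kappa))^{\lfloor\alpha N\rfloor}$. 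Stirling then yields $\log \E_{\pl}[X_d] = N G(d/N) + O(\log N)$, where $G(\beta):=F_\alpha(\beta)-\alpha\log p(\kappa)$ is exactly the function from the proof of Lemma~\ref{lemDc}.

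Next I would split the sum at a small threshold $\beta_0>0$. On the intermediate–large range $d\in [\beta_0 N,(\beta_c-\delta)N]$, Lemma~\ref{lemDc} supplies $G(\beta)\le -\eta(\delta,\beta_0)<0$ uniformly, so this part of the sum is at most $N \exp(-\eta N+O(\log N)) = \exp(-\Omega(N))$, which is much smaller than $\exp(-c_\delta\sqrt N)$. Thus the $\sqrt N$ rate in the lemma is actually set by the small-$d$ regime $d\in[1,\beta_0 N]$. For this regime I would first establish the Gaussian asymptotic
\[
\log \frac{q_\kappa(\beta)}{p(\kappa)} \;\le\; -C_\kappa\,\sqrt{\beta}\qquad \text{for all } \beta\in(0,\beta_0],
\]
for some positive constant $C_\kappa$ depending only on $\kappa$. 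This stems from decomposing $Z_2=(1-2\beta)Z_1+2\sqrt{\beta(1-\beta)}\,W$ with $W$ an independent standard normal: the event $\{|Z_2|>\kappa\}$ conditional on $|Z_1|\le\kappa$ is concentrated within $O(\sqrt\beta)$ of the boundaries $\pm\kappa$, and integrating against $\phi$ gives $1-q_\kappa(\beta)/p(\kappa)=\Theta(\sqrt\beta)$.

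Combining this with the crude bound $\binom{N}{d}\le (eN/d)^d$,
\[
\log \E_{\pl}[X_d] \;\le\; d\log(eN/d) - C_\kappa\alpha\sqrt{dN}.
\]
Since $d\log(eN/d)=\sqrt{dN}\cdot\sqrt{d/N}\,\log(eN/d)$ and $\sqrt\beta\log(1/\beta)\to 0$ as $\beta\to 0$, we can shrink $\beta_0$ so that $d\log(eN/d)\le \tfrac12 C_\kappa\alpha\sqrt{dN}$ uniformly on this range, giving
\[
\log \E_{\pl}[X_d] \;\le\; -\tfrac{C_\kappa\alpha}{2}\sqrt{dN} \;\le\; -\tfrac{C_\kappa\alpha}{2}\sqrt N.
\]
Summing over $d\in[1,\beta_0 N]$ contributes only an extra factor $O(N)$, absorbed by taking $c_\delta$ slightly smaller. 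The main obstacle is the sharp asymptotic $\log(q_\kappa(\beta)/p(\kappa))=-\Theta(\sqrt\beta)$ as $\beta\to 0$: this $\sqrt\beta$ rate is precisely what yields the $\sqrt N$ in the exponent, and it reflects the boundary behaviour of the bivariate Gaussian as the correlation approaches $1$. Everything else is routine first-moment bookkeeping driven by the uniform negativity of $G$ from Lemma~\ref{lemDc}.
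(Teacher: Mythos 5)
Your proof is correct and follows essentially the same strategy as the paper's: a first-moment bound on $\E_{\pl}|\{\sigma\in S:0<\mathrm{dist}(\sigma,\sigma^*)\le(\beta_c-\delta)N\}|$, split between a small-distance regime where one needs the boundary estimate $p(\kappa)-q_\kappa(\beta)\gtrsim\sqrt{\beta}$ to beat the entropy term $\binom{N}{d}$, and a medium-distance regime controlled by the uniform negativity of $G=F_\alpha-\alpha\log p(\kappa)$ from Lemma~\ref{lemDc}. The only cosmetic difference is how the key estimate $p(\kappa)-q_\kappa(\beta)=\Theta(\sqrt\beta)$ is derived: the paper's Lemma~\ref{Boundary} gets it by a direct coordinate-level computation (splitting $\sum_i X_i$ into the first $N-d$ and last $d$ coordinates and localizing each sum near the boundary), whereas you use the conditional Gaussian decomposition $Z_2=(1-2\beta)Z_1+2\sqrt{\beta(1-\beta)}\,W$ and localize $Z_1$ near $\pm\kappa$; both are sound and give the same $\sqrt{\beta}$ rate driving the $\exp(-c_\delta\sqrt N)$ bound.
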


Before proving Lemma~\ref{pl-cluster} we introduce some notation. 
Let $q(m) = \P(\sigma,\sigma' \in H_{\kappa}( X))$, where $\sigma,\sigma' \in \Sigma_N$ are two arbitrary vectors with  $\langle \sigma,\sigma' \rangle = m$ and $X$ is a standard $N$-dimensional Gaussian vector.  Then
\begin{equation}
\label{eq:qmquk}
	q(m) =  q_{\kappa}\Big(\frac{1}{2} + \frac{m}{2N}\Big)\,,
\end{equation}
where $q_\kappa$ is defined in~\eqref{eqQk}.

Lemma~\ref{pl-cluster} will follow from the following results.
\begin{lemma}
\label{Boundary}
There exists a constant $\epsilon>0$ sufficiently small such that for all $m \leq \epsilon N$,
	\begin{equation}
		 \log \E_{\pl}[|\{\sigma \in S: |\langle \sigma, \sigma^*\rangle| = N-m\}|] \leq -c \sqrt{mN}\,.
	\end{equation}
\end{lemma}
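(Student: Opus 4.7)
The plan is to compute the expected count by linearity and independence, and then show that the small-$\beta$ asymptotics of $q_\kappa$ beat the entropy of the neighbourhood.

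\emph{Step 1: linearity in the planted model.} If $m$ is odd there are no $\sigma\in\Sigma_N$ with $|\langle\sigma,\sigma^*\rangle|=N-m$ (parity forces $N-m$ to have the parity of $N$), so assume $m$ is even. There are $\binom{N}{m/2}$ vectors with $\langle\sigma,\sigma^*\rangle=N-m$ and $\binom{N}{m/2}$ more with $\langle\sigma,\sigma^*\rangle=-(N-m)$. In the planted model the constraint vectors $X_i$ are independent, and $q_\kappa$ is symmetric about $1/2$ (flipping the sign of a Gaussian coordinate preserves $|Z_i|$), so for every such $\sigma$,
\begin{equation*}
	\P_{\pl}(\sigma\in S)\;=\;\Bigl(\tfrac{q_\kappa(\beta)}{p(\kappa)}\Bigr)^{\lfloor\alpha N\rfloor}\qquad\text{with } \beta:=m/(2N),
\end{equation*}
and linearity gives $\E_{\pl}\bigl[|\{\sigma\in S:|\langle\sigma,\sigma^*\rangle|=N-m\}|\bigr]\leq 2\binom{N}{m/2}(q_\kappa(\beta)/p(\kappa))^{\lfloor\alpha N\rfloor}$.

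\emph{Step 2: the square-root Gaussian estimate.} The quantitative heart of the proof is
\begin{equation*}
	p(\kappa)-q_\kappa(\beta)\;\geq\; c_\kappa\sqrt{\beta}\qquad \text{for all }\beta\in(0,\beta_0],
\end{equation*}
for constants $c_\kappa,\beta_0>0$ depending only on $\kappa$. To prove this I would write $Z_2=-(1-2\beta)Z_1+2\sqrt{\beta(1-\beta)}\,W$ with $W\sim N(0,1)$ independent of $Z_1$. For small $\beta$, $Z_2\approx -Z_1+2\sqrt\beta\,W$, so $\{|Z_1|\leq\kappa,\ |Z_2|>\kappa\}$ is concentrated in the boundary layer $|Z_1|\in(\kappa-C\sqrt\beta,\kappa)$, whose Gaussian measure is $\Theta(\sqrt\beta\,\phi(\kappa))$; on this layer $\P(|Z_2|>\kappa\mid Z_1)$ is bounded below by a positive constant, and integration yields the claimed $\sqrt\beta$ lower bound.

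\emph{Step 3: combining.} Using $\log(1-x)\leq -x$ and $\binom{N}{m/2}\leq\exp(NH(m/(2N)))$,
\begin{equation*}
	\log\E_{\pl}\bigl[|\{\sigma\in S:|\langle\sigma,\sigma^*\rangle|=N-m\}|\bigr]\;\leq\;\log 2+NH(\beta)-\tfrac{c_\kappa}{p(\kappa)}\lfloor\alpha N\rfloor\sqrt{\beta}.
\end{equation*}
Since $\alpha N\sqrt{\beta}=(\alpha/\sqrt{2})\sqrt{mN}$ and $NH(m/(2N))=O(m\log(eN/m))$, this is at most $O(m\log(eN/m))-c'\sqrt{mN}$. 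For $m\leq\eps N$ the ratio $\sqrt{m/N}\log(eN/m)\leq\sqrt\eps\log(e/\eps)\to 0$ as $\eps\to 0$, so for $\eps$ small enough the $-c'\sqrt{mN}$ term dominates and the lemma follows.

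\emph{Main obstacle.} The entire argument hinges on the square-root gap $p(\kappa)-q_\kappa(\beta)\asymp\sqrt{\beta}$ near $\beta=0$: this is why each constraint contributes a $\sqrt{\beta}$, rather than $\beta$, saving, and it is precisely what puts $\sqrt{mN}$ (rather than $m$) into the bound. The tight $\sqrt\beta$ behaviour comes from the degenerate anti-correlated limit of the bivariate Gaussian, so cannot be read off from Lemma~\ref{lemDc}, which only gives a $-\Theta(N)$ estimate when $\beta$ is bounded away from $0$; one must carry out the boundary-layer computation above.
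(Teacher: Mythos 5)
Your proof is correct and follows essentially the same route as the paper: compute the planted-model expectation as $\binom{N}{m/2}(q/p)^{\alpha N}$, establish the boundary-layer estimate $p(\kappa)-q_\kappa(\beta)\gtrsim\sqrt{\beta}$ for small $\beta$ (the paper does this by lower-bounding the Gaussian measure of an explicit rectangle in $(\sum_{i\le N-m/2}X_i,\sum_{i>N-m/2}X_i)$, while you decompose $Z_2$ conditionally on $Z_1$; these are equivalent), and then observe that the $\sqrt{mN}$ decay dominates the $O(m\log(N/m))$ entropy term once $m/N$ is small. The only cosmetic difference is that you work with the anti-correlated representation $\beta=m/(2N)$ and invoke the symmetry $q_\kappa(\beta)=q_\kappa(1-\beta)$, whereas the paper works directly with overlap $N-m$.
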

\begin{proof}
	Let $\sigma_0$ be an  vector in $\Sigma_N$. Note that
	\begin{equation}
	\label{eq:1-P-edge}
	\begin{split}
				\E_{\mathrm{pl}}[|\{\sigma \in S: \langle \sigma, \sigma^*\rangle = N-m\}|]  &= \sum_{\sigma: \langle \sigma, \sigma_0\rangle = N-m} \P(\sigma \in S \mid \sigma_0 \in S)\\
		& = \binom{N}{\frac{m}{2}} \Big(\frac{q(m)}{p(\kappa)}\Big)^{\alpha N}\,.
	\end{split}
	\end{equation}
We claim that uniformly over all $|m| \leq \epsilon N$ with sufficiently small $\epsilon$,
\begin{equation}
\label{eq:2.5}
	 q(N-m) \leq p(\kappa) - c \sqrt{m/N}
\end{equation}
Provided with \eqref{eq:2.5}, we have
\begin{equation}
	\eqref{eq:1-P-edge} \leq \exp\Big\{ m\log(2N/m)/2 - c\sqrt{mN}\Big\} \leq \exp\Big\{-c\sqrt{mN}\Big\}\,.
\end{equation}
Hence Lemma \ref{Boundary} follows. Now it remains to prove \eqref{eq:2.5}. To this end, let $\mathbf 1$ be the all $1$'s vector of length $N$ and $\mathbf {1}_m$ be an $N$-dimensional vector with the first $(N-m)$ coordinates  $+1$ and the remaining $m$ coordinates  $-1$.  We write
\begin{align*}
		p(\kappa) - q(N-m)  & = \P(|\langle \mathbf {1},X\rangle| \leq \kappa \sqrt{N} , |\langle  {\mathbf 1_{\tfrac m 2}, X}\rangle| > \kappa \sqrt{N})\\
		& \geq \P\left(\sum_{i=1}^{N-\tfrac m 2} X_i \in (\kappa \sqrt{N},\kappa \sqrt{N} + \sqrt{m}), \sum_{i = N-\tfrac m 2+1}^N X_i \in (-3\sqrt{m},-2\sqrt{m})\right)\\
		& = \P\left(Z_1 \in \left(\kappa \sqrt{\frac{N}{N-\tfrac m 2}},\kappa \sqrt{\frac{N}{N-\tfrac m 2}} + \sqrt{\frac{m}{N-\tfrac m 2}}\right), Z_2 \in (-3\sqrt 2,-2 \sqrt 2)\right)\\
		& \geq c\sqrt{m/N}
	\end{align*}
	where $Z_1,Z_2$ are two independent standard Gaussian random variables.
This proves \eqref{eq:2.5}.
\end{proof}

\begin{lemma}
\label{1-dis-con}
For any $\epsilon>0$, uniformly in $ |m| \leq N - \epsilon N$,
	\begin{equation}
	\label{eq:1-dis-con}
		\lim_{N\to \infty}\Big|\frac{1}{N}\log \E_{\pl}\big[\big|\{\sigma \in S: \langle \sigma, \sigma^*\rangle = m\}\big|\big] - \Big[F\Big(\frac{1}{2} + \frac{m}{2N}\Big) - \alpha \log p(\kappa) \Big]\Big| =0\,.
	\end{equation}
\end{lemma}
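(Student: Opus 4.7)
The plan is a direct first-moment computation in the planted model, identifying the resulting expression with $F_\alpha(\beta)$ for $\beta = \tfrac12 + \tfrac{m}{2N}$, and then checking that the error terms are uniform on the required range of $m$.

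First, fix $m$ with $|m|\le N-\epsilon N$ (of the same parity as $N$) and condition on $\sigma^* = \sigma_0$ for some arbitrary $\sigma_0 \in \Sigma_N$; by the symmetry of the planted model in $\sigma^*$ this is without loss of generality. By linearity of expectation,
\begin{equation}
\E_{\mathrm{pl}}\bigl[\bigl|\{\sigma \in S : \langle \sigma,\sigma^*\rangle = m\}\bigr|\bigr]
= \sum_{\sigma:\,\langle \sigma,\sigma_0\rangle = m} \P\bigl(\sigma \in S \mid \sigma_0 \in S\bigr).
\end{equation}
Since in the planted model the constraint vectors $X_1,\dots,X_{\lfloor \alpha N\rfloor}$ are i.i.d.\ conditional on $\sigma_0 \in H_\kappa(X_i)$ for every $i$, the events $\{\sigma\in H_\kappa(X_i)\}$ factor, each contributing a factor of $\P(\sigma,\sigma_0 \in H_\kappa(X))/\P(\sigma_0 \in H_\kappa(X)) = q(m)/p(\kappa)$. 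Counting the number of $\sigma$ at inner product $m$ from $\sigma_0$ gives
\begin{equation}
\E_{\mathrm{pl}}\bigl[\bigl|\{\sigma \in S : \langle \sigma,\sigma^*\rangle = m\}\bigr|\bigr]
= \binom{N}{(N-m)/2} \left(\frac{q(m)}{p(\kappa)}\right)^{\lfloor \alpha N\rfloor},
\end{equation}
exactly as in the computation leading to \eqref{eq:1-P-edge}.

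Next I would take the logarithm and divide by $N$. Setting $\beta = \tfrac12 + \tfrac{m}{2N}$, the constraint $|m|\le N-\epsilon N$ becomes $\beta \in [\epsilon/2, 1-\epsilon/2]$, a compact interval bounded away from $0$ and $1$. On this interval, Stirling's formula gives
\begin{equation}
\frac{1}{N}\log \binom{N}{(N-m)/2} = H(1-\beta) + O\!\left(\frac{\log N}{N}\right) = H(\beta) + O\!\left(\frac{\log N}{N}\right),
\end{equation}
with the $O(\cdot)$ uniform in $\beta$ on the compact range. Combining this with \eqref{eq:qmquk} yields
\begin{equation}
\frac{1}{N}\log \E_{\mathrm{pl}}\bigl[\bigl|\{\sigma \in S : \langle \sigma,\sigma^*\rangle = m\}\bigr|\bigr]
= H(\beta) + \alpha \log q_\kappa(\beta) - \alpha \log p(\kappa) + o(1),
\end{equation}
and the right-hand side is exactly $F_\alpha(\beta) - \alpha \log p(\kappa) + o(1)$ by the definition \eqref{eqFdef} of $F_\alpha$.

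The only step requiring care is the uniformity claim. This reduces to two facts on the compact range $\beta \in [\epsilon/2,1-\epsilon/2]$: (i) Stirling's approximation for $\binom{N}{\lfloor \beta N\rfloor}$ holds with an error $O(\log N / N)$ that is uniform in $\beta$ on any compact subset of $(0,1)$; and (ii) $q_\kappa(\beta)$ is continuous and strictly positive on this interval, so $\log q_\kappa(\beta)$ is bounded, and the multiplicative discrepancy between $\lfloor \alpha N\rfloor$ and $\alpha N$ contributes only $O(1/N)$ to the normalized logarithm. Neither is a real obstacle; the substantive content of the lemma is the factorization identity in the first step, which is essentially automatic in the planted model.
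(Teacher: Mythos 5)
Your proof is correct and follows essentially the same route as the paper: compute the first moment in the planted model to obtain $\binom{N}{(N\pm m)/2}\,(q(m)/p(\kappa))^{\lfloor\alpha N\rfloor}$, then apply the uniform version of Stirling on the compact range of $\beta$ and use \eqref{eq:qmquk} to identify the result with $F_\alpha(\beta)-\alpha\log p(\kappa)$. The paper states the binomial as $\binom{N}{(N+m)/2}$ while you write $\binom{N}{(N-m)/2}$; these agree by symmetry of the binomial coefficient, so there is no discrepancy.
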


\begin{proof}
	Note that
	\begin{equation}
		\E_{\pl}\big[\big|\{\sigma \in S: \langle \sigma, \sigma^*\rangle = m\}\big|\big] = \binom{N}{\frac{N+m}{2}} \cdot \left(\frac{q(m)}{p(\kappa)}\right)^{\alpha N}
	\end{equation}
	and that for any $\epsilon>0$, uniformly in $\epsilon N \leq k \leq N - \epsilon N$,
	\begin{equation}
		\frac{\frac{1}{N}\log \binom{N}{k}}{H(k/N) } \to 1 \quad \text{ as $N \to \infty$.}
	\end{equation}
Combining these with \eqref{eq:qmquk} yields \eqref{eq:1-dis-con}.
\end{proof}
\begin{proof}[\bf Proof of Lemma \ref{pl-cluster}]Let $\delta \in(0,\beta_c)$. It suffices to show that
	\begin{equation}
	\label{eq:pl-ex}
			\E_{\pl}|\{\sigma \in S : 0  < \mathrm{dist}( \sigma, \sigma^*)  \leq (\beta_c-\delta)N \}| \leq \exp\left\{- c_{\delta}\sqrt{N}\right\}\,.
\end{equation}
This bound follows from Lemmas \ref{Boundary} and \ref{1-dis-con} and \eqref{eq:G<0}.
\end{proof}

\section{Concentration of the number of solutions}
\label{secConcentrate}
Fixing $N$, consider the symmetric perceptron as a discrete-time stochastic process with one constraint vector added at each time step.
The solution space at time $t \in \mathbb N$ is defined as
\begin{equation}
	S_{t} := \bigcap_{i=1}^{t} H_\kappa(X_i)
\end{equation}
which is the intersection of $t$ random rectangles. 

The following  strengthening of Theorem~\ref{PartitionF} is the main result of this section.  
\begin{theorem}
\label{Stub}
Under Assumption \ref{assumption1}, for every $\epsilon>0$ there exists $M = M(\epsilon)$ such that for any $\alpha < \alpha_c$,
	\begin{equation}
	\label{eq:Stub}
		\limsup_{N \to \infty}\sup_{0 \leq t \leq \alpha N}\P\left(\Big|\log \Big(\frac{|S_t|}{\E[|S_t|]} \Big) \Big|\geq M \log N\right) \leq \epsilon\,.
			\end{equation}
\end{theorem}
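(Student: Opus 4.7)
The plan is to prove Theorem~\ref{Stub} by induction on the number of revealed constraints $t$, exploiting the martingale identity $\E[|S_{t+1}|\mid\mathcal{F}_t] = p(\kappa)|S_t|$ with respect to the natural filtration $\mathcal{F}_t=\sigma(X_1,\dots,X_t)$. The upper bound $\log|S_t| \le \log\E|S_t| + M\log N$ (with probability at least $1-N^{-M}$) is immediate from Markov, so the substance is the matching lower bound. Set $L_t := \log(|S_t|/\E|S_t|)$, so that $L_t - L_{t-1} = \log(|S_t|/|S_{t-1}|) - \log p(\kappa)$.

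Conditional on $\mathcal{F}_{t-1}$, direct calculation gives
\[
\E[|S_t|\mid\mathcal{F}_{t-1}] = p(\kappa)|S_{t-1}|, \qquad \E[|S_t|^2\mid\mathcal{F}_{t-1}] = \sum_{\sigma,\sigma'\in S_{t-1}} q(\langle\sigma,\sigma'\rangle).
\]
The function $q_\kappa$ is even about $\beta=1/2$ (by the symmetry $Z_i\mapsto -Z_i$) with $q_\kappa(1/2) = p(\kappa)^2$, hence $q(m) - p(\kappa)^2 = O(m^2/N^2)$ for $|m|\ll N$, and the conditional coefficient of variation of $|S_t|$ is small precisely when the pair overlap distribution on $S_{t-1}$ concentrates near $\langle\sigma,\sigma'\rangle=0$. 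I would therefore carry a coupled pair of invariants through the induction: $(A_t)$, the concentration statement $|L_t|\le M\log N$ with probability at least $1-\eps$; and $(B_t)$, a structural statement that only an $N^{-\Omega(1)}$ fraction of pairs $(\sigma,\sigma')\in S_t^2$ has $|\langle\sigma,\sigma'\rangle|\ge \delta N$.

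The structural bound $(B_t)$ is obtained from the planted-model calculations already in Section~\ref{secPlanted} (Lemmas~\ref{Boundary} and~\ref{1-dis-con} together with~\eqref{eq:G<0}): under Assumption~\ref{assumption1} and $\alpha<\alpha_c$, the expected fraction of pairs at macroscopic overlap is exponentially small, and $(A_t)$ upgrades this expectation to a fractional in-probability bound by dividing by $|S_t|^2$. Given $(B_{t-1})$, combining the Taylor expansion of $q_\kappa$ with the $e^{-c\sqrt{mN}}$ bound of Lemma~\ref{Boundary} for small overlaps yields $\text{Var}(|S_t|\mid\mathcal{F}_{t-1}) \le N^{-\Omega(1)}\E[|S_t|\mid\mathcal{F}_{t-1}]^2$ on a good event, and Chebyshev gives $|L_t - L_{t-1}| = O(N^{-c})$ conditionally. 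A Freedman-type martingale concentration inequality applied to $(|S_t|/p(\kappa)^t)$ then accumulates the one-step errors into the uniform $O(\log N)$ deviation claimed in the theorem.

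The main obstacle is the mutual dependence of the two invariants: each requires the other at the previous step, and the constants must be arranged so that errors do not drift beyond $O(\log N)$ over the $\alpha N$ iterations. A secondary subtlety is the treatment of pairs at very small nonzero overlap $m=o(\sqrt{N})$, where the Taylor expansion of $q_\kappa$ is insufficient and the sharper $\sqrt{mN}$ bound of Lemma~\ref{Boundary} is essential to exclude small clusters of near-identical solutions that would otherwise inflate the conditional variance.
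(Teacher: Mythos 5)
Your high-level plan matches the paper's: track $Q_t:=\log(|S_t|/\E|S_t|)$ one constraint at a time using the martingale identity $\E[|S_t|\mid\mathcal F_{t-1}]=p(\kappa)|S_{t-1}|$, control the increment $Y_t:=|S_t|/(p(\kappa)|S_{t-1}|)-1$ via the overlap distribution on $S_{t-1}$, and maintain a coupled structural invariant on overlaps (your $(B_t)$ corresponds to the paper's notion of ``regularity,'' Definition~\ref{def-reg}, with the paper's scale $\sqrt{|Q_t|+\log N}$ being adaptively tuned). Your observation that the fractional overlap bound can be pulled from the unconditional second-moment bound $\E|S_t|^2\le C\E[|S_t|]^2$ by re-weighting by $|S_t|^2/\E[|S_t|]^2$ is exactly the trick in Lemma~\ref{Exp-R}.

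However, there is a genuine gap at the crux of the argument: \textbf{Chebyshev is not strong enough for the one-step control}, and this is precisely where the paper's main technical work goes. You correctly compute $\mathrm{Var}(Y_t\mid\mathcal F_{t-1})=O(\mathrm{polylog}(N)/N)$ on the good event, but Chebyshev then only yields $\P_t(|Y_t|>1/2)=O(\mathrm{polylog}(N)/N)$. A union bound over $t\le\alpha N$ steps gives $O(\mathrm{polylog}(N))$, which does not tend to zero, so you cannot even guarantee that $Y_t\ge -1/2$ uniformly --- and the moment some $Y_t$ is near $-1$, $\log(1+Y_t)$ blows up and the Taylor expansion, the quadratic-variation bound for Freedman, and the bound on $\sum Y_i^2$ all fail simultaneously. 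The same problem undermines a direct Freedman application: you need the bad event $\{|Y_t|>1/2\}$ to be excluded with probability $o(1/N)$ per step, and no second-moment bound can deliver that. The paper instead proves an \emph{exponential} tail for $Y_t$: Lemma~\ref{Xttail} gives $\P_t(|Y_{t+1}|\ge C\sqrt{(|Q_t|+\log N)/N}\,x)\le Ce^{-x}$ on the regular event. With that tail, the stopping times $\tau_Y,\tau_Q$ can be shown to exceed $\alpha N$ with high probability (Lemma~\ref{ST-123}), and the accumulated error admits the iterative bound $\E|Q_{t\wedge\tau-1}|\le e^{Ct/N}\log N$ (Lemma~\ref{TheInduction}). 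Proving this exponential tail is the heart of the argument: it is done via Lemma~\ref{LD}, which bounds the deviation of $|H_\kappa(X)\cap A|/|A|$ by exhibiting $k\approx\sqrt{N}$ nearly-orthogonal samples from $A$ (Lemma~\ref{kpt}) and applying a Gaussian comparison lemma for weakly correlated coordinates (Lemma~\ref{gaussian}). This $k$-sample/higher-moment device has no counterpart in your proposal and cannot be replaced by a variance calculation; without it, your plan stalls at the union-bound step.

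A minor related point: you write ``$\mathrm{Var}(|S_t|\mid\mathcal F_{t-1})\le N^{-\Omega(1)}\,\E[|S_t|\mid\mathcal F_{t-1}]^2$.'' For the accumulation over $\alpha N$ steps to land at an $O(\log N)$ deviation (rather than $O(N^{1-c})$), you in fact need the exponent to be exactly $1$, i.e.\ $O(\mathrm{polylog}(N)/N)$. This holds once the overlaps are $O(\sqrt{N\,\mathrm{polylog}(N)})$ (which the paper's regularity provides), but the phrasing $N^{-\Omega(1)}$ is too weak to close the computation.
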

Theorem~\ref{PartitionF} follows immediately since $\frac{1}{N} \log \E |S_t| = \log 2 + \frac{t}{n} \log p(\kappa)   $.

Theorem~\ref{Stub} says that the cardinality of the solution space will only deviate from its expectation slightly after adding $\alpha N$ random constraints. To prove this theorem, we will look at the change in this deviation at each time when a new constraint is added.  Write
\begin{equation}
	Q_t := \log \left( \frac{|S_t|}{\E[|S_t|]}\right) =\sum_{i = 1}^t\log \left( \frac{|S_i|/|S_{i-1}|}{\E[|S_i|]/\E[|S_{i-1}|]}\right)\,.
\end{equation}
Note that
$\E[|S_i|]/\E[|S_{i-1}|] = p(\kappa)$. Let
\begin{equation}
	Y_{t} :=\frac{1}{p(\kappa)}\Big(\frac{|S_{t}|}{|S_{t-1}|} -p(\kappa)\Big)\,,
\end{equation}
so that
\begin{equation}
\label{eq:QtXt}
	Q_t = \sum_{i = 1}^t \log (1 + Y_i) \,.
\end{equation}
Since $0 \leq |S_t| \leq |S_{t-1}|$, we have that $-1 \leq Y_t \leq (1 - p(\kappa))/p(\kappa)$; however, we expect that the $Y_t$'s are very close to zero with high probability. Hence by a Taylor expansion, as $N \to \infty$,
\begin{equation}
	Q_{t} = \sum_{i = 1}^{t}  Y_i - \frac{Y_i^2}{2} + o (1)\,.
\end{equation}
 In fact, we will prove in Lemma \ref{Xttail} that $Y_t$ is roughly of order $N^{-1/2}$ provided that $S_{t-1}$ is ``regular'' (see Definition \ref{def-reg}). Thus if the $S_{t}$'s are all regular, we expect the second term $ \sum_{i = 1}^{\alpha N}Y_i^2$ be of order $O(1)$. In addition, notice that $(\sum_{i = 0}^t Y_i)_{t \geq 0}$ is a martingale with respect to the filtration
\begin{equation}
 	\mathcal F_t = \sigma(S_1,S_2,...,S_t), \quad t \geq 1\,.
 \end{equation}
Hence if the $S_{t}$'s are all regular, we also expect the first term $\sum_{i = 1}^{\alpha N}  Y_i $ to be of order $O(1)$, and hence $Q_t = O(1)$.

%We now provide an outline of the proof of Theorem~\ref{Stub}. 
\begin{defn}
\label{def-reg}
	For each $t \geq 0$, we let $(\rsm^{(t)}_i)_{i \geq 1}$ be independent uniform random samples in $S_t$ and denote $\P_t(\cdot): = \P(\cdot \mid \mathcal F_t)$, $\E_t[\cdot]: = \E[\cdot \mid \mathcal F_t]$. We say $S_t$ is regular if
	\begin{equation}
		\P_t\Big(|\langle \rsm^{(t)}_1,\rsm^{(t)}_2 \rangle| \leq C_2 \sqrt{N}\sqrt{|Q_t| + \log N} \Big) \geq 1 - N^{-10}\,.
	\end{equation}
\end{defn}
Roughly speaking, $S_t$ is regular if  two random samples from $S_t$ are almost orthogonal with high probability. Define the stopping time
\begin{equation}
	\tau_{S} := \inf \left\{  t \geq 0:	S_t \text{ is not regular}\right\}.
\end{equation}
The following lemma says that for regular $S_t$, $Y_{t+1}$ is roughly of order $N^{-1/2}$.
\begin{lemma}There exists a constant $C>0$ such that for all $t \geq 0$,
\label{Xttail}
\begin{equation}
	\1_{\tau_S >t}	\P_t\left(| Y_{t+1}|  \geq C\sqrt{\frac{|Q_t| + \log N }{N}}x\right) \leq C\exp (-x)\,.
\end{equation}
\end{lemma}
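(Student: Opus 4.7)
The plan is to compute a high even moment of $Y_{t+1}$ conditional on $\mathcal F_t$ and apply Markov's inequality. Throughout, condition on $\mathcal F_t$ on the event $\tau_S>t$, and note that $X=X_{t+1}$ is a standard $N$-dimensional Gaussian independent of $\mathcal F_t$. With $g(y):=\1_{|y|\le \kappa}-p(\kappa)$ we have
\begin{equation}
Y_{t+1}\;=\;\frac{1}{p(\kappa)|S_t|}\sum_{\sigma\in S_t}g\!\left(\langle X,\sigma\rangle/\sqrt N\right),
\end{equation}
so $\E_t[Y_{t+1}]=0$, and the problem reduces to concentration of the empirical average $f(X):=|S_t|^{-1}\sum_{\sigma}\1_{\sigma\in H_\kappa(X)}$ about its mean $p(\kappa)$. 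The two structural features I will exploit are (i) joint Gaussianity of the projections $\{\langle X,\sigma\rangle/\sqrt N:\sigma\in S_t\}$ with correlations $\rho_{\sigma\sigma'}=\langle\sigma,\sigma'\rangle/N$, and (ii) the evenness of $y\mapsto\1_{|y|\le\kappa}$, the defining symmetry of the symmetric perceptron.

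For the second moment, unfolding immediately yields
\begin{equation}
\E_t[Y_{t+1}^2]\;=\;p(\kappa)^{-2}\,\E_t\!\left[q(\langle \rsm_1^{(t)},\rsm_2^{(t)}\rangle)-p(\kappa)^2\right].
\end{equation}
The analytic input is that $q(m)-p(\kappa)^2=O(m^2/N^2)$ for $|m|=o(N)$: this follows from the Hermite expansion of the even function $\1_{|y|\le\kappa}$, whose odd coefficients vanish, giving $q(m)-p(\kappa)^2=\sum_{n\ge 2\text{ even}}a_n^2\,n!\,(m/N)^n$ without any linear term. Combining this with the regularity bound $|\langle\rsm_1^{(t)},\rsm_2^{(t)}\rangle|\le C_2\sqrt{N(|Q_t|+\log N)}$, and using the trivial $|q-p^2|\le 1$ on the $\P_t$-probability $\le N^{-10}$ exceptional event, produces $\E_t[Y_{t+1}^2]\le C(|Q_t|+\log N)/N$.

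For higher moments, expand
\begin{equation}
\E_t[Y_{t+1}^{2k}]\;=\;p(\kappa)^{-2k}\,\E_{\rsm_1,\dots,\rsm_{2k}}\E_X\!\left[\prod_{i=1}^{2k}g\!\left(\langle X,\rsm_i\rangle/\sqrt N\right)\right],
\end{equation}
with $\rsm_1,\dots,\rsm_{2k}$ i.i.d.\ uniform on $S_t$. Inserting the Hermite expansion of $g$ (only even indices $\ge 2$ appear) and applying the Wick/Isserlis formula for Gaussian moments writes the inner expectation as a sum over diagrams whose edges are labelled by $\rho_{ij}$. Because each vertex has $g$-degree $\ge 2$, every diagram pairs $\rsm_i$ with some other $\rsm_j$ through at least two $\rho$ factors, producing a factor of order $\rho_{ij}^2$ per such pair. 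Keeping only the leading quadratic terms and bounding the combinatorics by the number of perfect matchings $|\mathcal M_{2k}|=(2k-1)!!$ of $[2k]$ yields
\begin{equation}
\left|\E_X\!\left[\prod_i g(y_{\rsm_i})\right]\right|\;\le\;C^{2k}\!\!\sum_{M\in\mathcal M_{2k}}\!\!\prod_{\{i,j\}\in M}\rho_{ij}^2.
\end{equation}
Taking expectation over $\rsm_i$, using independence across matched pairs, invoking regularity on each factor, and counting matchings delivers $\E_t[Y_{t+1}^{2k}]\le\bigl(Ck(|Q_t|+\log N)/N\bigr)^k$.

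Markov's inequality then gives $\P_t(|Y_{t+1}|\ge u)\le \E_t[Y_{t+1}^{2k}]/u^{2k}$; setting $u=C_0 x\sqrt{(|Q_t|+\log N)/N}$ and choosing $k\asymp x$ yields the claimed sub-exponential tail $\le C\exp(-x)$ (in fact the argument delivers a sub-Gaussian bound $\exp(-cx^2)$, which is comfortably stronger than required). The main obstacle is the Hermite/Wick step that justifies the pairing bound: the quadratic cancellation $q(m)-p(\kappa)^2=O((m/N)^2)$ arising from the symmetry of the constraint must be shown to propagate to arbitrary even moments so that every pair contributes at the $(|Q_t|+\log N)/N$ scale rather than at the weaker $\sqrt{(|Q_t|+\log N)/N}$ scale. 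The evenness of $\1_{|y|\le\kappa}$ is precisely what kills the linear Hermite coefficient and enables this propagation.
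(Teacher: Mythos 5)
Your approach---moment method via Hermite/Wick expansion---is genuinely different from the paper's, which does not compute moments at all. The paper proves Lemma~\ref{LD}: if one samples $k$ i.i.d.\ uniform points from $A$, the event $E_k$ that all $k$ avoid $H_\kappa(X)$ \emph{and} are pairwise nearly orthogonal has (unconditional) probability $\le C\,\P(|Z|>\kappa)^k$ by a near-independence estimate for weakly correlated Gaussians (Lemma~\ref{gaussian}), while an inductive counting lemma (Lemma~\ref{kpt}) shows that if the empirical fraction $\P_A(\rsm_1 \notin H_\kappa(X)\mid X)$ exceeds $\P(|Z|>\kappa)+\Delta$, then $\P_A(E_k\mid X)\ge(\P(|Z|>\kappa)+\Delta/2)^k$. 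Choosing $k\asymp \varphi_N^{-1}\sqrt N$ and comparing yields $\exp(-c\,k\Delta)=\exp(-cx)$. Your setup (express $Y_{t+1}$ as a centered empirical average, identify the second moment with $p^{-2}\E_t[q(\langle\rsm_1,\rsm_2\rangle)-p^2]$, and note the Hermite-rank-2 cancellation $q(m)-p^2=O((m/N)^2)$) is correct and is the same mechanism that underlies the $(|Q_t|+\log N)/N$ scale in the paper's regularity condition.

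However, there is a genuine gap at the high-moment step. The bound
\begin{equation}
\Big|\E_X\prod_{i=1}^{2k} g(Y_{\rsm_i})\Big|\;\le\;C^{2k}\sum_{M\in\mathcal M_{2k}}\prod_{\{i,j\}\in M}\rho_{ij}^2
\end{equation}
is asserted, not proved, and the phrase ``keeping only the leading quadratic terms and bounding the combinatorics by the number of perfect matchings'' conceals two serious difficulties. First, $g=\1_{|\cdot|\le\kappa}-p(\kappa)$ has Hermite coefficients decaying only polynomially: using $\int_{-\kappa}^{\kappa}H_n\phi=-2H_{n-1}(\kappa)\phi(\kappa)$ and Plancherel--Rotach asymptotics, $|a_n|\sqrt{n!}\asymp n^{-3/4}$. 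Consequently the sum over Hermite indices $n_1,\dots,n_{2k}\ge 2$ is not absolutely summable against any bound that merely estimates each diagram crudely; in particular the naive hypercontractive bound $\|Z_n\|_{2k}\le(2k-1)^{n/2}\|Z_n\|_2$ on each chaos gives $\sum_n|a_n|\sqrt{n!}\,(2k-1)^{n/2}(\varphi_N/\sqrt N)^{n/2}$, which only converges for $k\lesssim\sqrt N/\varphi_N$ and then yields $\|Y_{t+1}\|_{2k}\lesssim k\,\varphi_N/\sqrt N$, not the $\sqrt k\,\varphi_N/\sqrt N$ you claim. That linear-in-$k$ growth gives a sub-\emph{exponential} tail after optimizing Markov, matching the paper's $\exp(-cx)$ but not your advertised sub-Gaussian $\exp(-cx^2)$. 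Second, even at fixed Hermite index, the Isserlis sum over multigraphs with all degrees $\ge 2$ is not literally dominated by perfect matchings of pairs with $\rho_{ij}^2$ weights; e.g.\ even cycles and mixed-degree diagrams require additional AM--GM manipulations and $|\rho_{ij}|\le 1$ reductions, and the combinatorial prefactors must be tracked against $(2k-1)!!$. Without these two issues resolved, the proposal as written does not establish the lemma. The paper's resampling argument sidesteps moment combinatorics entirely, which is exactly why it is the route chosen.

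Two minor notes: the cancellation $\E_t[Y_{t+1}]=0$ and the second-moment identity you wrote are correct, including the diagonal $\rsm_1=\rsm_2$ term since $q(N)=p(\kappa)$; and the regularity event (Definition~\ref{def-reg}) is what licenses discarding the $\P_t$-exceptional set of non-orthogonal pairs, as you use. If you want to salvage a moment-based proof, the right intermediate target is $\|Y_{t+1}\|_{2k}\lesssim k\,\varphi_N/\sqrt N$ for $k\lesssim\sqrt N/\varphi_N$ (giving $\exp(-cx)$), and even this requires a careful treatment of the chaos sum that you have not supplied.
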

By \eqref{eq:QtXt}, Lemma \ref{Xttail} provides an upper bound on $Y_{t+1}$, which can be used to control the increment of $|Q_t|$. This will be one of the key ingredients in proving Lemma \ref{TheInduction}, which gives an upper bound on $|Q_t|$ for time $t$ before a stopping time defined below. 

With $C_3>0$ a constant to be determined later in Lemma \ref{ST-123}, we define
\begin{equation}
\label{def-tau123}
\begin{split}
&\tau_{Y} := \inf \left\{  t \geq 1:	| Y_t|  \geq C_3\sqrt{\frac{|Q_{t-1}| + \log N}{N}} \log N \right\}\,,\\
&\tau_{Q} := \inf \left\{  t \geq 0:	|Q_t| \geq (\log N)^2 \right\}\,,	
\end{split}
\end{equation}and
\begin{equation}
\label{def-tau}
	\tau = \tau_S \wedge \tau_Y \wedge \tau_Q\,.
\end{equation}

\begin{lemma}
\label{TheInduction}
There exists a constant $C>0$ such that for all $t \geq 1$,
	\begin{equation}
	\label{eq:QtIndBd}
		\E[|Q_{t \wedge \tau -1}|] \leq \exp(Ct/N) \log N\,.
	\end{equation}
\end{lemma}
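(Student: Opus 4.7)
The plan is to prove the bound by induction on $t$, with inductive hypothesis $A_s := \E[|Q_{s \wedge \tau - 1}|] \leq \exp(Cs/N)\log N$ for all $s < t$; the base case $t=1$ is immediate since $Q_0 = 0$. For the inductive step, I would expand each surviving log-increment via $\log(1+y) = y - y^2/2 + R(y)$ with $|R(y)| \leq C|y|^3$ for $|y| \leq 1/2$, which is applicable for $i \leq \tau$ because the definitions of $\tau_Q$ and $\tau_Y$ force $|Y_i| \leq C_3(\log N)^{3/2}/\sqrt{N}$, a quantity tending to $0$. Writing $T := t \wedge \tau - 1$, this yields the decomposition
\begin{equation}
Q_T = M_T - \tfrac{1}{2} V_T + E_T, \qquad M_T := \sum_{i=1}^T Y_i, \quad V_T := \sum_{i=1}^T Y_i^2, \quad |E_T| \leq C\sum_{i=1}^T |Y_i|^3,
\end{equation}
to which I will apply the triangle inequality $A_t \leq \E|M_T| + \tfrac{1}{2}\E V_T + \E|E_T|$.

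The key structural observation is that $\{\tau \geq i\} \in \mathcal F_{i-1}$ for every $i$, because the regularity of $S_0,\ldots,S_{i-1}$, the increment bounds on $|Y_1|,\ldots,|Y_{i-1}|$, and the bound $|Q_{i-1}| < (\log N)^2$ are all $\mathcal F_{i-1}$-measurable. This lets me pull the indicator through the conditional expectation; combined with Lemma \ref{Xttail} (whose subexponential tail integrates to $\E_{i-1}[Y_i^2] \leq C(|Q_{i-1}|+\log N)/N$ on $\{\tau \geq i\}$) and with the identity $Q_{i-1} = Q_{i \wedge \tau - 1}$ on the same event, I obtain
\begin{equation}
\E V_T \leq \sum_{i=1}^t \E\bigl[\1_{\tau \geq i}\E_{i-1}[Y_i^2]\bigr] \leq \frac{C}{N}\sum_{i=1}^t (A_i + \log N).
\end{equation}
Plugging in the inductive hypothesis for $i < t$ and summing the resulting geometric series bounds this by $(C/N) A_t + c_1 \log N \exp(Ct/N)$. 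The same idea, now with $\sigma := t \wedge \tau$ a bounded stopping time, the decomposition $M_T = M_\sigma - Y_\sigma$, and optional stopping for the martingale $M^2 - \langle M \rangle$, gives $\E M_T^2 \leq 4\E V_T + O(1)$, and hence $\E|M_T| \leq O(\sqrt{\log N}\exp(Ct/(2N)))$ after Cauchy-Schwarz (any self-referential $\sqrt{A_t/N}$ contribution is absorbable by AM-GM). Finally, the deterministic bound $|Y_i| \leq C_3(\log N)^{3/2}/\sqrt N$ on $\{i \leq \tau\}$ gives $\E|E_T| \leq Ct(\log N)^{9/2}/N^{3/2} = o(1)$ for $t \leq \alpha N$.

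Inserting these three estimates into the triangle inequality yields an inequality of the form $A_t \leq \varepsilon_N A_t + c_2 \log N \exp(Ct/N)$ with $\varepsilon_N = O(1/N) < 1$ for $N$ large; solving gives $A_t \leq 2c_2 \log N \exp(Ct/N)$, and enlarging $C$ in the exponential once more absorbs the residual constant and closes the induction. The main obstacle is the stopping-time bookkeeping: one has to verify that $\{\tau \geq i\} \in \mathcal F_{i-1}$ at each step (otherwise the martingale cancellations are lost under truncation), and one has to isolate the self-referential $A_t$ term with a sufficiently small $O(1/N)$ coefficient so that the induction closes rather than blows up along the way to $t = \alpha N$.
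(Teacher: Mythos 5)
Your proposal follows essentially the same route as the paper: Taylor-expand $\log(1+Y_i)$, split $Q_{t\wedge\tau-1}$ into a martingale part and a quadratic-variation part, control both via Lemma~\ref{Xttail} together with the $\mathcal F_{i-1}$-measurability of $\{\tau\geq i\}$, and close a self-referential recursion. The paper uses the cruder but sufficient bound $|\log(1+x)-x|\leq x^2$ (valid for $x\geq -3/5$) instead of a third-order expansion, and handles the boundary term by writing $\sum_{i=1}^{t\wedge\tau-1}Y_i = \sum_{i=1}^{t\wedge\tau}Y_i - Y_{t\wedge\tau}$ (since $t\wedge\tau-1$ is not a stopping time); these are cosmetic differences.

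The one substantive issue is your last step. From ``$A_t \leq \varepsilon_N A_t + c_2 \log N\, e^{Ct/N}$, solving gives $A_t\leq 2c_2\log N\, e^{Ct/N}$, and enlarging $C$ absorbs the residual constant,'' you cannot conclude: replacing $C$ by $C'>C$ only dominates a multiplicative constant $2c_2>1$ when $t\gtrsim N\log(2c_2)/(C'-C)$, so for small $t$ the induction does not close. The induction can be made to close, but the mechanism is different from what you describe: one must exploit that $\frac{c}{N}\sum_{i<t}A_i \leq \frac{c}{C}\bigl(e^{Ct/N}-1\bigr)\log N$ has the small prefactor $c/C$ (so take $C$ large \emph{before} running the induction) together with the ``$-1$'' slack, and then the remaining $O(t\log N/N)$ and $O(1)$ terms fit inside. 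The paper avoids this bookkeeping entirely: it derives the recursive inequality
\begin{equation}
\E[|Q_{t\wedge\tau-1}|] \leq 3C'\Bigl(\frac{t\log N + \sum_{i=1}^{t-1}\E[|Q_{i\wedge\tau-1}|]}{N} + \frac{1}{p(\kappa)}\Bigr),
\end{equation}
then introduces a majorizing sequence $b_t$ satisfying the corresponding equality, observes $b_t\geq \E[|Q_{t\wedge\tau-1}|]$ by a trivial induction, and solves the linear recurrence for $b_t$ \emph{exactly}, giving $b_t+\log N=(1+3C'/N)^{t-1}\log N$. That exact solution is what produces the clean bound $\leq e^{Ct/N}\log N$ with no leading constant to absorb. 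So your plan is correct in spirit and in all its intermediate estimates, but the closing argument needs to be replaced by either the small-prefactor bookkeeping or, more cleanly, the paper's exact-recurrence device. (Also a small slip: on $\{i\leq\tau-1\}$ the definitions give $|Y_i|\lesssim (\log N)^2/\sqrt N$, not $(\log N)^{3/2}/\sqrt N$; this changes nothing downstream.)
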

Lemma \ref{timetau} says that the stopping time $\tau$ will occur later than time $\alpha N$ with high probability. Theorem \ref{Stub} will be a direct consequence of Lemmas \ref{TheInduction} and \ref{timetau}. 
\begin{lemma}Under Assumption \ref{assumption1}, for any $\alpha < \alpha_c$,
\label{timetau}
	$$\lim_{N \to \infty} \P(\tau < \alpha N) = 0\,.$$
\end{lemma}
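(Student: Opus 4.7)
The plan is a union bound $\P(\tau < \alpha N) \leq \P(\tau_Q = \tau \leq \alpha N) + \P(\tau_Y = \tau \leq \alpha N) + \P(\tau_S = \tau \leq \alpha N)$; on each event $\{\tau_X = \tau\}$ the other two stopping times exceed $\tau$, so the associated inductive control is available. To control $\tau_Q$: on $\{\tau_Q = \tau \leq \alpha N\}$, since $\tau \leq \tau_Y$, the final jump satisfies $|\log(1+Y_\tau)| = O(C_3(\log N)^2/\sqrt{N}) = o(1)$, hence $|Q_{\tau - 1}| \geq (\log N)^2 - o(1)$. Combining this with $\E[|Q_{(\alpha N)\wedge \tau - 1}|] \leq e^{C\alpha}\log N$ from Lemma~\ref{TheInduction} and applying Markov's inequality gives $\P(\tau_Q = \tau \leq \alpha N) = O(1/\log N)$.

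To control $\tau_Y$: for each $t \leq \alpha N$, on $\{\tau_S > t-1\}$, Lemma~\ref{Xttail} applied with $x = (C_3/C)\log N$ gives $\P_{t-1}(|Y_t| \geq C_3\sqrt{(|Q_{t-1}|+\log N)/N}\log N) \leq CN^{-C_3/C}$. Choosing $C_3/C \geq 3$, a union bound over $t \leq \alpha N$ yields $\P(\tau_Y = \tau \leq \alpha N) \leq \alpha N \cdot CN^{-3} = o(1)$.

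The main obstacle is controlling $\tau_S$. Let $N_t(m)$ count ordered pairs $(\sigma_1,\sigma_2) \in S_t^2$ with $\langle \sigma_1,\sigma_2\rangle = m$; by independence across constraints, $\E[N_t(m)] = 2^N\binom{N}{(N-m)/2}q(m)^t$. Writing $\beta = \tfrac{1}{2} + m/(2N)$ and using $\E[|S_t|]^2 = 2^{2N}p(\kappa)^{2t}$, Stirling gives
\begin{equation*}
\frac{\E[N_t(m)]}{\E[|S_t|]^2} \leq \frac{C}{\sqrt{N}}\exp\bigl(N[F_\alpha(\beta) - F_\alpha(\tfrac{1}{2})]\bigr).
\end{equation*}
Under Assumption~\ref{assumption1}, the fact that $F_\alpha''(\tfrac12) < 0$ for $\alpha < \alpha_c$ (as seen in Figure~\ref{figure1}) together with the symmetry $q_\kappa'(\tfrac12) = 0$ gives $F_\alpha(\beta) - F_\alpha(\tfrac12) \leq -c(\beta - \tfrac12)^2$ in a neighbourhood of $\tfrac12$, so $\E[N_t(m)]/\E[|S_t|]^2 \leq (C/\sqrt{N})\exp(-cm^2/N)$ for $|m| = o(N)$.

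The crucial trick is that on the bad event $\{|m| > C_2\sqrt{N(|Q_t|+\log N)}\}$ one has $|Q_t| < m^2/(NC_2^2) - \log N$, so $e^{-2Q_t} \leq N^{-2}\exp(2m^2/(NC_2^2))$. Using the identity $|S_t|^{-2} = e^{-2Q_t}/\E[|S_t|]^2$ then yields
\begin{equation*}
\E\biggl[\frac{N_t^{\mathrm{bad}}}{|S_t|^2}\biggr] \leq \frac{N^{-2}}{\E[|S_t|]^2}\sum_{|m| > C_2\sqrt{N\log N}} e^{2m^2/(NC_2^2)}\E[N_t(m)] \leq CN^{-2-c'C_2^2},
\end{equation*}
where $c' := c - 2/C_2^2 > 0$ for $C_2$ large. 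Markov's inequality then gives $\P(S_t\text{ not regular}) \leq N^{-11}$ provided $c'C_2^2 \geq 19$, and the union bound over $t \leq \alpha N$ completes the proof. The delicate point is precisely this engineering: the $|Q_t|$-term in the regularity threshold is sized so that the Gaussian decay of the pair-overlap distribution past $C_2\sqrt{N(|Q_t|+\log N)}$ dominates the possible deterioration $e^{2|Q_t|}$ of $|S_t|^{-2}$ uniformly in $|Q_t|$, and this cancellation is what makes a single choice of constant $C_2$ (depending only on $F_\alpha''(\tfrac12)$) suffice throughout the whole process.
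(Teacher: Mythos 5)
Your overall decomposition---union bound into the three cases $\tau=\tau_S$, $\tau=\tau_Y$, $\tau=\tau_Q$, with $\tau_S$ controlled by a second-moment/tilting argument, $\tau_Y$ by Lemma~\ref{Xttail} plus a union bound, and $\tau_Q$ by Lemma~\ref{TheInduction} plus Markov---is the same as the paper's. Two remarks on where the execution differs.

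For $\tau_S$ the paper proves Lemma~\ref{Exp-R} via an exponential-moment/layer-cake identity, $\E[\exp(\tfrac c2\langle\rsm_1,\rsm_2\rangle^2/N+2Q_t)]\le C$, starting from the input $\E[|\{(\sigma_1,\sigma_2)\in S_t:|\langle\sigma_1,\sigma_2\rangle|\ge\lambda\sqrt N\}|]\le e^{-c\lambda^2}\E[|S_t|]^2$, whereas you sum over overlap classes $m$ and cancel the factor $e^{-2Q_t}$ against the Gaussian tail $e^{-cm^2/N}$ on the bad event. This is a more hands-on route to the same cancellation. Two things to be careful about: (i) the estimate $\E[N_t(m)]/\E[|S_t|]^2\le (C/\sqrt N)e^{-cm^2/N}$ must hold for \emph{all} $m$ up to $\pm N$ and uniformly in $t\le\alpha N$, not just in a neighbourhood of $m=0$ where the quadratic expansion of $F_{t/N}$ applies; at $|m|=N$ the left side equals $1/\E[|S_t|]=e^{-N(\log2+(t/N)\log p(\kappa))}$, so you need $c$ smaller than $\log 2+\alpha\log p(\kappa)$, which also forces the lower bound on $C_2^2$ to depend on $\alpha_c-\alpha$ as well as on $F_\alpha''(1/2)$. (ii) You implicitly use that $F_{t/N}(\beta)-F_{t/N}(1/2)$ is monotone increasing in $t/N$ (since $H(\beta)-H(1/2)\le 0$ and $\log q_\kappa(\beta)-\log q_\kappa(1/2)\ge 0$), which justifies replacing $t/N$ by $\alpha$; this should be said.

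For $\tau_Q$ there is a genuine gap. You assert that on $\{\tau_Q=\tau\}$, because $\tau\le\tau_Y$, the jump $|\log(1+Y_{\tau})|$ is $O((\log N)^2/\sqrt N)$. But $\tau\le\tau_Y$ only constrains $Y_s$ for $s<\tau_Y$, so if $\tau_Q=\tau_Y$ the jump $Y_{\tau_Q}$ is \emph{not} controlled by the $\tau_Y$ cutoff; in fact $\tau_Q=\tau_Y$ means the jump is at least the $\tau_Y$ threshold. The argument only applies on $\{\tau_Q=\tau<\tau_Y\}$. This is repairable: the sub-event $\{\tau_Q=\tau_Y=\tau\le\alpha N\}$ is contained in $\{\tau_Y=\tau\le\alpha N\}$, which you already bound. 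The paper sidesteps this by splitting the $\tau_Q$ event on whether $|Q_{\tau_Q-1}|\le(\log N)^2/2$ and directly bounding the probability of a large jump ($\P_t(Y_{t+1}\le -1/2)$) via Lemma~\ref{Xttail}, rather than invoking the $\tau_Y$ cutoff; that formulation avoids the boundary case by construction. Either fix works, but as written your $\tau_Q$ step is incomplete.
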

 \begin{proof}[\bf Proof of Theorem \ref{Stub}]
By Lemmas \ref{TheInduction} and \ref{timetau}, the probability bound \eqref{eq:Stub} follows from Markov's inequality.
\end{proof}

\subsection{Proof of Lemma \ref{Xttail}}
In this section, we consider an arbitrary subset $A \subset \Sigma_N$ and define the the probability measure $\P_A$ under which $(\rsm_i)_{i \geq 1}$ are independent uniformly random vectors in $A$.

Lemma \ref{Xttail} follows from the following lemma.
\begin{lemma}
\label{LD}
There exists constant $C,c>0$ depending only on $\kappa$ such that for all $N$ sufficiently large, all $A\subset \Sigma_N$, and $\varphi_N \in \R$ satisfying 
	\begin{equation}
	\label{eq:deltaA-as}
		\delta_A : = \P_A(|\langle \rsm_1,\rsm_2 \rangle| > \varphi_N\sqrt{N}) \leq \varphi_N^{-2}N^{-2}\,,
	\end{equation}
 we have
	\begin{equation}
	\label{eq:LD}
		\P\left(\Big| \frac{|H_\kappa(X) \cap A|}{|A|} - p(\kappa) \Big|> \frac{\varphi_Nx}{\sqrt{N}}\right) \leq C \exp (-cx)\,,
	\end{equation}
	where $X$ is a standard $N$-dimensional Gaussian vector. 
	%where $H$ is a random rectangle in $\{-1,1\}^N$.
\end{lemma}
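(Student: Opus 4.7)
The plan is to bound the centered moments $\E[(Z-p(\kappa))^{2k}]$, where $Z := |H_\kappa(X)\cap A|/|A|$, and then apply Markov's inequality with a near-optimal choice of $k$. Writing $f(y) = \mathbf 1_{|y|\leq \kappa} - p(\kappa)$, which is even, bounded, and mean-zero under the standard Gaussian law, we have $Z - p(\kappa) = |A|^{-1}\sum_{\sigma\in A} f(\langle X,\sigma\rangle/\sqrt N)$. Expanding,
\begin{equation}
\E_X(Z-p(\kappa))^{2k} = \E_{\sigma_1,\dots,\sigma_{2k}}\,\E_X\prod_{i=1}^{2k} f\bigl(\langle X,\sigma_i\rangle/\sqrt N\bigr),
\end{equation}
where $\sigma_1,\dots,\sigma_{2k}$ are i.i.d.\ uniform on $A$. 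For fixed $\sigma_1,\dots,\sigma_{2k}$, the vector $(\langle X,\sigma_i\rangle/\sqrt N)_i$ is jointly Gaussian with covariance $M_{ii}=1$, $M_{ij}=\langle \sigma_i,\sigma_j\rangle/N$.

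Next, I would expand $f$ in Hermite polynomials, $f=\sum_{j\geq 1} c_{2j}H_{2j}$ (only even indices survive by parity), and apply the Wick formula, expressing the inner Gaussian expectation as a sum over pairings of half-edges, each contributing a product of off-diagonal entries of $M$. I would then split the outer expectation according to the "regular" event $R$ that all $\binom{2k}{2}$ pairs satisfy $|\langle\sigma_i,\sigma_j\rangle|\leq \varphi_N\sqrt N$. By a union bound and the hypothesis on $\delta_A$,
\begin{equation}
\P(R^c)\leq \binom{2k}{2}\delta_A \leq C k^2 \varphi_N^{-2}N^{-2},
\end{equation}
and on $R^c$ the integrand is trivially bounded by $1$. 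On $R$, every edge of every diagram contributes at most $\varphi_N/\sqrt N$ in absolute value, so each pairing with $\ell$ edges contributes at most $(\varphi_N/\sqrt N)^\ell$; summing over the Hermite-index vectors and the pairings (using that the Hermite coefficients $c_{2j}$ are absolutely summable because $f$ is bounded, and that the number of pairings on $2k$ labeled nodes is $(2k-1)!!$), one arrives at the target bound $\E|Z-p(\kappa)|^{2k}\leq (C_1 k\varphi_N/\sqrt N)^{2k}$, valid for $k\leq c\sqrt N/\varphi_N$.

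Once such a $p$-th moment bound is in hand, Markov's inequality gives $\P(|Z-p(\kappa)|>\varphi_N x/\sqrt N)\leq (C_1 k/x)^{2k}$; optimizing over $k$ (take $k\asymp x$) yields the desired $Ce^{-cx}$ tail for $x$ in the relevant range, and trivially for small $x$ by adjusting constants. The principal obstacle is the diagrammatic bookkeeping in the Hermite/Wick expansion: one must separate the "dominant" pairings, in which each $\sigma_i$ is paired with exactly one other $\sigma_{\pi(i)}$ (contributing order $(\text{Var}\,Z)^k\cdot (2k-1)!!$), from the many higher-order diagrams, while handling diagonal contributions $M_{ii}=1$ carefully, so that the final combinatorial factor scales like $k^k$ rather than $k^{Ck}$ — this is what produces a sub-exponential rather than merely polynomial tail, and matches the scale needed to feed into Lemma~\ref{Xttail} and thence into the inductive concentration argument of Lemma~\ref{TheInduction}.
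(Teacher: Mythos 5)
Your route — bounding $\E|Z-p(\kappa)|^{2k}$ via a Hermite/Wick expansion and then optimizing $k$ in Markov — is a legitimate strategy and would indeed yield an exponential tail if the moment bound held, but it is not the paper's approach and, more importantly, as written it has a genuine gap. The paper instead proves Lemma~\ref{LD} by a sampling argument: it lower-bounds (Lemma~\ref{kpt}, a purely combinatorial sequential estimate) the probability that $k$ i.i.d.\ draws from $A$ are pairwise near-orthogonal \emph{and} all avoid $H_\kappa(X)$, and upper-bounds the $X$-average of that same quantity by $C\,\P(|Z|>\kappa)^k$ via a Gaussian comparison inequality (Lemma~\ref{gaussian}, a change-of-measure plus Hanson--Wright argument). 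Comparing the two bounds with $k\asymp \varphi_N^{-1}\sqrt N$ and $\Delta=\varphi_N x/\sqrt N$ gives $\P(\text{deviation}>\Delta)\,e^{ck\Delta}\le C$, i.e.\ the exponential tail. So no moment expansion appears at all.

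The concrete gap in your argument is the assertion that the Hermite coefficients of $f(y)=\mathbf 1_{|y|\le\kappa}-p(\kappa)$ are absolutely summable ``because $f$ is bounded.'' Boundedness gives only $\sum_j c_j^2<\infty$. In fact, integrating by parts shows the (normalized) Hermite coefficient of an indicator is $c_n\asymp\frac{1}{\sqrt{n!}}\,\mathrm{He}_{n-1}(\kappa)\phi(\kappa)$, and Plancherel--Rotach asymptotics give $|\mathrm{He}_{n-1}(\kappa)|/\sqrt{(n-1)!}\asymp n^{-1/4}$, hence $|c_n|\asymp n^{-3/4}$ and $\sum_n|c_n|=\infty$. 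This breaks exactly the step where you ``sum over the Hermite-index vectors,'' and there is no simple patch: one would have to exploit the $1/\sqrt{j_i!}$ normalizations and the combinatorics of no-self-loop matchings together with only $\ell^2$ control on $(c_j)$, which is precisely the delicate bookkeeping you flag as ``the principal obstacle'' and leave unresolved. (A minor further point: the lower-order contribution from $R^c$, bounded by $\binom{2k}{2}\delta_A\le 2k^2\varphi_N^{-2}N^{-2}$, needs to be shown to be dominated by the target $(Ck\varphi_N/\sqrt N)^{2k}$; this holds in the regime $\varphi_N\gtrsim 1$ used downstream, but should be stated.) As it stands the proof is incomplete; the paper's two-lemma decomposition avoids the Hermite expansion entirely and is the cleaner route.
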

The rest of this section is devoted to prove Lemma \ref{LD}.
\begin{lemma}
\label{kpt}
For all $N$ sufficiently large and every $B \subset A \subset \Sigma_N$, if we denote 
	$$\delta_A = \P_A(|\langle \rsm_1,\rsm_2 \rangle| >  \varphi_N\sqrt{N}) \quad \text{and } \quad q = \P_A(\rsm_1 \not \in B)  \,,$$
	 then for all $1 \leq k \leq \frac{q^2}{4\delta_A}$
\begin{equation}
	\P_A\big(E_k(B)\big) \geq \Big(q - \frac{2k \delta_A}{q} \Big)^k \,,
\end{equation}
where
	\begin{equation}
	\label{eq:def-Ek}
		E_k(B) := \big\{\rsm_i \not \in B, \forall 1 \leq i \leq k\big\} \cap\big\{ |\langle \rsm_i,\rsm_j\rangle| \leq \varphi_N\sqrt{N} \quad  \forall 1 \leq i,j \leq k, i \not = j\big\}.
	\end{equation}
\end{lemma}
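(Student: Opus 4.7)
The plan is to prove the lemma by induction on $k$. The base case $k=1$ is immediate since $\P_A(E_1(B)) = q \geq q - 2\delta_A/q$.

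For the inductive step, let $V := A \setminus B$ and, for each $\sigma \in V$, define $B_\sigma := B \cup \{\tau \in A : |\langle \sigma, \tau \rangle| > \varphi_N \sqrt{N}\}$, $q_\sigma := \P_A(\tau \notin B_\sigma)$, and $f(\sigma) := \P_A(|\langle \sigma, \rsm_1\rangle| > \varphi_N \sqrt{N})$, so that $q_\sigma \ge q - f(\sigma)$ and $\E_A[f(\rsm_1)] = \delta_A$. Conditioning on $\rsm_1$ rewrites the event for $\rsm_2,\ldots,\rsm_k$ as $E_{k-1}(B_\sigma)$, giving
\begin{equation}
\P_A(E_k(B)) \;=\; \sum_{\sigma\in V}\frac{1}{|A|}\,\P_A(E_{k-1}(B_\sigma)) \;=\; q\cdot \E\!\left[\P_A(E_{k-1}(B_{\rsm_1}))\,\big|\,\rsm_1\in V\right].
\end{equation}
For those $\sigma$ with $q_\sigma \ge 2\sqrt{(k-1)\delta_A}$, the inductive hypothesis applies with parameters $(B_\sigma, k-1)$ and gives $\P_A(E_{k-1}(B_\sigma)) \ge (q_\sigma - 2(k-1)\delta_A/q_\sigma)^{k-1}$; for the remaining $\sigma$ the trivial bound $\P\ge 0$ suffices, with the mass of these exceptional $\sigma$ controlled via Markov's inequality (using $\E[f\1_V]\le\delta_A$).

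To combine the per-$\sigma$ estimates, apply Jensen's inequality to the truncated function $\tilde g(x) := \max\{(x - 2(k-1)\delta_A/x)^{k-1},\,0\}$. A direct computation of the second derivative shows that $g(x) := (x - 2(k-1)\delta_A/x)^{k-1}$ is convex on $\{x:x-2(k-1)\delta_A/x > 0\}$ whenever $k\ge 3$, and the truncation $\tilde g$ inherits convexity on all of $(0,\infty)$. Combined with the estimate $\E[q_{\rsm_1}\,|\,\rsm_1\in V] \ge q - \delta_A/q$ coming from $\E[f\1_V]\le\delta_A$, Jensen yields
\begin{equation}
\E\!\left[\tilde g(q_{\rsm_1})\,|\,\rsm_1\in V\right] \;\ge\; \tilde g(q - \delta_A/q).
\end{equation}
A Taylor expansion of $\tilde g(q-\delta_A/q)$ to leading order in $\delta_A/q^2$ produces $(q - (2k-1)\delta_A/q + O(\delta_A^2/q^3))^{k-1}$, which under the hypothesis $k\le q^2/(4\delta_A)$ dominates $(q - 2k\delta_A/q)^{k-1}$; multiplying by the outer factor $q \ge q - 2k\delta_A/q$ recovers the target $(q - 2k\delta_A/q)^k$. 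The case $k=2$ is handled directly via the second-moment estimate $\P_A(E_2(B)) \ge q^2 - \delta_A$, which exceeds $(q - 4\delta_A/q)^2$ throughout the allowed range $q^2 \ge 8\delta_A$, so no Jensen step is required there.

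The main obstacle is the handling of the exceptional subset $\{\sigma\in V : q_\sigma < 2\sqrt{(k-1)\delta_A}\}$, on which the inductive hypothesis fails. Near the boundary $k \approx q^2/(4\delta_A)$ the fraction of such $\sigma$ can be non-negligible, and Markov's inequality applied to $f$ alone is too crude to absorb the loss. The resolution likely combines a second-moment estimate on $f$, a careful choice of the cut-off defining $\tilde g$, and exploitation of the precise numerical cushion $q - 2k\delta_A/q \ge q/2$ guaranteed by the range constraint, so that the exceptional mass contributes only lower-order corrections to the per-$\sigma$ average.
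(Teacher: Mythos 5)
Your proposal takes a genuinely different route from the paper, but it contains a gap that you yourself flag, so it does not constitute a proof.

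\textbf{What the paper does.} The paper does not condition on the \emph{value} of $\rsm_1$; it inducts on the conditional probability $\P_A(E_{t+1}\mid E_t)$, showing $\P_A(E_{t+1}\mid E_t)\geq q - 2t\delta_A/q$ for $t\leq q^2/(4\delta_A)$. The inductive step is a union bound, $\P_A(E_{k+1}\mid E_k)\geq q - k\,\P_A(|\langle\rsm_k,\rsm_{k+1}\rangle|>\varphi_N\sqrt N\mid E_k)$, followed by the key trick of replacing $E_k$ with the larger event $E_{k-1}$ (which is independent of $(\rsm_k,\rsm_{k+1})$), giving
\begin{equation}
\P_A\bigl(|\langle\rsm_k,\rsm_{k+1}\rangle|>\varphi_N\sqrt N\mid E_k\bigr)\leq \delta_A\cdot\frac{\P_A(E_{k-1})}{\P_A(E_k)} = \frac{\delta_A}{\P_A(E_k\mid E_{k-1})}\leq \frac{2\delta_A}{q},
\end{equation}
where the last step uses the inductive hypothesis and the range $k\leq q^2/(4\delta_A)$. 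Multiplying the factors $\P_A(E_{t+1}\mid E_t)$ over $t$ and bounding each by $q-2k\delta_A/q$ gives the claim. This argument is uniform in the sample paths: no small exceptional set of ``bad'' first samples $\sigma$ ever appears.

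\textbf{Where your argument breaks.} Your recursion $\P_A(E_k(B))=q\cdot\E[\P_A(E_{k-1}(B_{\rsm_1}))\mid\rsm_1\in V]$ is correct, but applying the inductive hypothesis at level $k-1$ requires $q_\sigma\geq 2\sqrt{(k-1)\delta_A}$ \emph{for the random $\sigma$ you condition on}. You control the exceptional set $\{\sigma\in V: q_\sigma< 2\sqrt{(k-1)\delta_A}\}$ only via the first moment $\E[f(\rsm_1)]\leq\delta_A$, and — as you concede in the last paragraph — Markov on $f$ is too weak when $k$ is near $q^2/(4\delta_A)$, since the threshold $q-2\sqrt{(k-1)\delta_A}$ degenerates to $0$. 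The ``resolution likely combines a second-moment estimate on $f$ \dots'' is an acknowledgment of a missing step, not a proof of it; there is no a priori second-moment control on $f$ under $\P_A$. Separately, the Jensen step needs $\tilde g$ to be convex and the subsequent Taylor expansion has an error term of size $O((k-1)\delta_A^2/q^3)$, which is $\Theta(\delta_A/q)$ when $k\asymp q^2/(4\delta_A)$ — i.e., the same order as the main correction — so the claimed domination $(q-(2k-1)\delta_A/q)^{k-1}\gtrsim(q-2k\delta_A/q)^{k-1}$ is not established at the boundary of the allowed range. The paper's conditional-probability induction sidesteps both problems entirely, which is exactly why it is the cleaner route; if you want to salvage your decomposition, you would need to prove the exceptional-mass bound rather than conjecture it.
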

\begin{proof} 

We will prove the following by induction
\begin{equation}
\label{eq:Ek-induction}
	\P_A(E_{t+1} \mid E_t) \geq q - \frac{2t \delta_A}{q} \quad \text{ for}\quad  1 \leq t \leq \frac{q^2}{4\delta_A}\,.
\end{equation}

First, since $\P_A(E_1) = \P_A(\rsm_1 \not \in B) =  1 - q$, we see that \eqref{eq:Ek-induction} holds for $t= 1$. 

Next, we suppose \eqref{eq:Ek-induction} is true for $t = k$ with $k \leq \frac{q^2}{4\delta_A}-1$. Since $\rsm_{k+1}$ is independent of $E_k$, we have
	\begin{align*}
		\P_A(E_{k+1} \mid E_k) &\geq  \P_A(\rsm_{k+1} \not \in B \mid E_k ) - \sum_{i = 1}^k \P_A(|\langle \rsm_{i},\rsm_{k+1}\rangle|> \varphi_N\sqrt{N} \mid E_k )\\
		& = q - k \P_A(|\langle \rsm_k,\rsm_{k+1}\rangle|> \varphi_N\sqrt{N} \mid E_k )\,.
	\end{align*}
Also, since $(\rsm_{k+1},\rsm_{k})$ is independent of $E_{k-1}$ and $E_{k-1} \supset E_k$,
	\begin{align*}
		\P_A(|\langle \rsm_k,\rsm_{k+1}\rangle| > \varphi_N\sqrt{N} \mid E_k ) \leq &\frac{\P_A(\{|\langle \rsm_k,\rsm_{k+1}\rangle| > \varphi_N\sqrt{N} \}\cap E_{k-1} )}{\P_A(E_k )} \\
		= &\P_A(|\langle \rsm_1,\rsm_{k+1}\rangle| > \varphi_N\sqrt{N}) \cdot \frac{\P(E_{k-1})}{\P(E_k)}\,.
	\end{align*}
By the induction hypothesis $$\frac{\P(E_{k-1})}{\P(E_k)} = \frac 1 {\P(E_k \mid E_{k-1})} \leq\frac q 2 \,,$$
where we used the assumption $k \leq \frac{q^2}{4\delta_A} -1$.  Combining with the previous two inequalities, we get $$\P_A(E_{k+1} \mid E_k)  \geq q - \frac{2k \delta_A}{q}\,.$$ Hence \eqref{eq:Ek-induction} holds.\end{proof}

\begin{lemma}
\label{gaussian}
	For any $\kappa >0$ and $m \in \N$, there exists constants $c_\kappa, C>0$ such that if we let $(X_i)_{i \geq 1}$ be a Gaussian vector with
	\begin{equation}
	\E[X_i] = 0, \quad	\E[X_i^2] = 1, \quad |E[X_iX_j]| \leq c_\kappa m^{-1}\,,
	\end{equation}
then 
\begin{align}
\label{eq:gaussian}
	&\P(\min_{1 \leq i \leq m} |X_i| > \kappa) \leq C\P\big(|X_1| > \kappa \big)^m \,,\\
	&\P(\max_{1 \leq i \leq m} |X_i| \leq \kappa) \leq C\P\big(|X_1| \leq \kappa \big)^m\,. \label{eq:gaussian-2}
\end{align}
 \end{lemma}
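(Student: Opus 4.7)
The plan is to prove both bounds via Hermite polynomial expansion of the indicators $\mathbf{1}\{|x|\leq \kappa\}$ and $\mathbf{1}\{|x|>\kappa\}$; I treat the two cases in parallel and write $g$ for whichever indicator is relevant and $a_0$ for the corresponding marginal probability ($p(\kappa)$ or $1-p(\kappa)$). Expand $g(x)=\sum_{n\geq 0}\tfrac{a_n}{n!}H_n(x)$, with $H_n$ the probabilists' Hermite polynomials and $a_n=\E[g(Z)H_n(Z)]$ for $Z\sim N(0,1)$. Evenness of $g$ kills all odd coefficients. A single integration by parts shows $a_{2k}=\pm 2H_{2k-1}(\kappa)\phi(\kappa)$, and Cram\'er's bound on Hermite polynomials gives $|a_n|/n!\leq C_\kappa/\sqrt{n!}$ for a constant depending only on $\kappa$.

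Substituting and evaluating multi-Hermite moments by Wick's theorem yields
\begin{equation}
\E\Bigl[\prod_{i=1}^m g(X_i)\Bigr]=\sum_{\mathbf n}\Bigl(\prod_i\frac{a_{n_i}}{n_i!}\Bigr)\sum_{\pi}\prod_{\{a,b\}\in\pi}\rho_{\iota(a)\iota(b)},
\end{equation}
where the inner sum runs over perfect matchings $\pi$ of a multiset containing $n_i$ copies of index $i$, subject to no pair matching two copies of the same index, and $\iota(a)$ denotes the index represented by the multiset element $a$. The $\mathbf n=\mathbf 0$ term contributes $a_0^m$, matching the target. Any other contributing $\mathbf n$ must have all $n_i$ even and at least two indices with $n_i\geq 2$, since a lone nonzero $n_i$ forces $\E[H_{n_i}(X_i)]=0$.

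For the remaining contribution I aggregate by support size $k\geq 2$ and total degree $2\ell\geq 2k$. Using $\binom{m}{k}\leq m^k/k!$, Hermite coefficients bounded by $C_\kappa^k$, at most $(2\ell-1)!!\leq(2\ell)^\ell$ Wick matchings, each covariance factor at most $c_\kappa/m$, and the residual $a_0^{m-k}$, the aggregate contribution of the $(k,\ell)$-class is at most $a_0^m\cdot(C_\kappa/a_0)^k(2\ell c_\kappa)^\ell m^{k-\ell}/k!$. The dominant case $\ell=k$ (all $n_i=2$) reduces via Stirling to $a_0^m\cdot(2eC_\kappa c_\kappa/a_0)^k$, and summing over $k\geq 2$ is a geometric series bounded by $O(c_\kappa^2)a_0^m$ once $c_\kappa$ is small enough depending on $\kappa$ so that $2eC_\kappa c_\kappa/a_0<1/2$. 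Classes with $\ell>k$ are suppressed by $m^{-(\ell-k)}$ and contribute negligibly, so the total is $\E[\prod_i g(X_i)]\leq(1+O(c_\kappa^2))a_0^m$, giving both bounds with explicit constants. The main obstacle is the careful combinatorial bookkeeping for Wick matchings under the no-same-index constraint and meshing the Hermite-coefficient bounds with the pairing counts uniformly in the multi-index; the interchange of sum and expectation is justified by the absolute convergence established in this same estimate.
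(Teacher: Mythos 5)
Your route is genuinely different from the paper's. The paper writes the probability directly as a Gaussian integral, pulls out a standard Gaussian density, symmetrizes over the signs $(e_1,\dots,e_m)$ so the quadratic form becomes $\sum_{i,j} g_{ij} e_i e_j$ with $g_{ij}=\delta_{ij}Z_iZ_j$, controls the resulting exponential sum via the Hanson--Wright bound~\cite{MR279864}, truncates the event $\{|Z|_2^2 \geq qm\}$ with chi-square concentration, and bounds $|\Sigma|$ from below via Ostrowski's determinant estimate~\cite{ostrowski1938approximation}. You instead expand the even indicator $g$ in the Hermite basis and apply Wick/Isserlis calculus for normal-ordered moments, treating the weak correlations $|\rho_{ij}|\leq c_\kappa/m$ perturbatively. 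Both routes are legitimate; yours makes it conceptually clear where $(1+O(c_\kappa^2))a_0^m$ comes from (the $\mathbf n=\mathbf 0$ and all-$n_i=2$ diagrams), and it produces both directions symmetrically rather than by a parallel repetition.

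There is, however, a genuine gap in the summation over the total degree $\ell$. Your per-$(k,\ell)$-class bound $a_0^m\,(C_\kappa/a_0)^k(2\ell c_\kappa)^\ell m^{k-\ell}/k!$ was obtained by discarding the Hermite-coefficient decay $\prod_i (n_i!)^{-1/2}$ (retaining only $C_\kappa^k$) and by using the crude matching count $(2\ell-1)!!\leq(2\ell)^\ell$, and it also omits the number of compositions of $\ell$ into $k$ parts. With this bound, $\sum_{\ell\geq k}(2\ell c_\kappa/m)^\ell$ is not summable: the $\ell$-th term exceeds $1$ once $\ell>m/(2c_\kappa)$ and then grows without bound, so the assertion that classes with $\ell>k$ ``are suppressed by $m^{-(\ell-k)}$'' fails in the high-degree tail. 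To repair this you must carry the $\prod_i(n_i!)^{-1/2}$ factors and play them off against a sharper matching count (for a fixed edge-multiplicity profile $(e_{ij})$ with $n_i=\sum_j e_{ij}$, the count is exactly $\prod_i n_i!\big/\prod_{i<j}e_{ij}!$, not $(2\ell-1)!!$); only then is the high-$\ell$ tail genuinely summable, and only then can the interchange of sum and expectation be justified, since as written the absolute-convergence estimate you point to is itself the divergent one. The dominant $\ell=k$ contribution and the resulting geometric series in $k$ are correct, so the approach is salvageable, but as presented the high-degree tail is not under control.
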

\begin{proof}
We first prove \eqref{eq:gaussian}. Let $\Sigma = (\Sigma_{ij}) = (\E[X_iX_j]) \in \R^{m \times m}$, then
	\begin{equation*}
	\begin{split}
				\P\left(\min_{1 \leq i \leq m}|X_i| > \kappa \right) &= \sqrt{\frac{1}{(2 \pi)^m |\Sigma|}} \int_{\max_{1 \leq i \leq m}|x_i| > \kappa} \exp\left\{ -\frac{x^\tran (\Sigma^{-1} - I)x + x^\tran x}{2}\right\} \dif \bx\\
		& = |\Sigma|^{-\frac 1 2} \E\left[ \exp\left\{ -\frac{Z^\tran (\Sigma^{-1} - I)Z}{2}\right\};\min_{1 \leq i \leq m}|Z_i| > \kappa\right]
	\end{split}
	\end{equation*}
	where $Z = (Z_1,...,Z_m)$ is a standard Gaussian vector. Since $(Z_1,...,Z_m)$ and $(e_1Z_1,...,e_mZ_m)$ has the same distribution for every $e_i \in \{-1,1\}$, we have 
		\begin{equation*}
		\P\left(\min_{1 \leq i \leq m}|X_i| > \kappa \right) = |\Sigma|^{-\frac 1 2} \E\left[ \sum_{(e_i)_{i=1}^m \in \{-1,1\}^m}\exp\left\{ -\frac 1 2 \sum_{1 \leq i,j \leq m} g_{ij} e_i e_j\right\};\min_{1 \leq i \leq m}|Z_i| > \kappa\right]
	\end{equation*}
	where $g_{ij} = \delta_{ij} Z_i Z_j$ and $\delta_{ij}$ is the $(i,j)$-th element of the matrix $\Sigma^{-1} - I$. Note that $\Sigma_{ii} = 1$ and $|\Sigma_{ij} | \leq c_\kappa m^{-1}$ for $i \not = j$ implies
	\begin{equation}
	\label{eq:sigma1}
	 	\max_{i,j} |\delta_{ij}| \leq C c_\kappa m^{-1}\,, \text{ and } ||\Sigma^{-1} - I||_2 \leq C c_\kappa\,.
	 \end{equation} 
Hence
	\begin{equation}
	\label{eq:gij-1}
		\sum_{i}g_{ii} \leq C c_\kappa m^{-1}|Z|_2^2, \quad \text{and} \quad \sum_{i,j} g_{ij}^2 \leq Cc_\kappa^2 m^{-2} |Z|_2^4\,.
	\end{equation}
Now, we first note that by \eqref{eq:sigma1}, for any $q >0$
\begin{equation*}
\begin{split}
		\E\left[ \exp\left\{ -\frac{Z^\tran (\Sigma^{-1} - I)Z}{2}\right\};|Z|_2^2 \geq q m\right] &\leq \E\left[ \exp\left\{ \frac{C c_\kappa|Z|_2^2}{2}\right\};|Z|_2^2 \geq q m\right]\\
		& = (1 - Cc_\kappa)^{\frac{m}{2}}\P (|Z|_2^2 \geq (1 - C c_\kappa) q m)\,.
\end{split}
\end{equation*}
Since the tail probability of a Chi-square random variable decays exponentially, we see that for $c_\kappa$ sufficiently small and $q$ sufficiently large,
\begin{equation}
\label{eq:Zlarge}
		\E\left[ \exp\left\{ -\frac{Z^\tran (\Sigma^{-1} - I)Z}{2}\right\};|Z|_2^2 \geq q m\right] \leq \P\big(|Z_1| > \kappa \big)^m\,.
\end{equation}
	Next, by \cite{MR279864}, we have that if $|\sum_{i}g_{ii}|$ and $\sum_{i,j} g_{ij}^2$ are sufficiently small, then
	\begin{equation}
		\sum_{(e_i)_{i=1}^m \in \{-1,1\}^m}\exp\left\{ -\frac 1 2 \sum_{1 \leq i,j \leq m} g_{ij} e_i e_j \right\} \leq C\,.
	\end{equation}
But we see from \eqref{eq:gij-1} that if $c_\kappa$ is sufficiently small, then on the event $\{|Z|^2_2 \leq q m\}$ the condition that $|\sum_{i}g_{ii}|$ and $\sum_{i,j} g_{ij}^2$ are sufficiently small holds. Combining with \eqref{eq:Zlarge}, we see that 
\begin{equation*}
	 \E\left[ \sum_{(e_i)_{i=1}^m \in \{-1,1\}^m}\exp\left\{ -\frac 1 2 \sum_{1 \leq i,j \leq m} g_{ij} e_i e_j\right\};\min_{1 \leq i \leq m}|Z_i| > \kappa\right] \leq C\P\big(|Z_1| > \kappa \big)^m\,.
\end{equation*}
Finally, \cite{ostrowski1938approximation} gives $|\Sigma| \geq 1- c_\kappa$. We complete the proof of \eqref{eq:gaussian}. 

The other inequality \eqref{eq:gaussian} can be proved verbatim by changing all occurrences of $\min_{1 \leq i \leq m} |X_i| > \kappa$ and $\min_{1 \leq i \leq m} |Z_i| > \kappa$ to $\max_{1 \leq i \leq m} |X_i| \leq \kappa$ and $\max_{1 \leq i \leq m} |Z_i| \leq \kappa$. In fact, it would be even easier as $\max_{1 \leq i \leq m} |Z_i| \leq \kappa$ implies $|Z|_2^2 \leq \kappa m$, and thus there is no need to consider the case $\{|Z|_2^2 \geq q m\}$.
\end{proof}

\begin{proof}[\bf Proof of Lemma \ref{LD}]
We will prove that
	\begin{equation}
	\label{eq:LD-1}
		\P\left( \frac{|H_\kappa(X) \cap A|}{|A|} < p(\kappa)- \frac{\varphi_Nx}{\sqrt{N}}\right) \leq C \exp (-cx)\,.
	\end{equation}
The other direction can be proved similarly.  %by changing all occurrences of $H$ and $|Z| >\kappa$ below to $H^c$ and $|Z| \leq \kappa$.

Recall $E_k(\cdot)$ as in \eqref{eq:def-Ek} and denote
	\begin{equation}
		A^\perp_k := \{(\sigma_i)_{i=1}^k  \in A^k:|\langle \sigma_i,\sigma_j\rangle| \leq \varphi_N\sqrt{N} \quad  \forall 1 \leq i,j \leq k, i \not = j\}\,.
	\end{equation}
Then
	\begin{equation}
		\E\big[\P_A(E_k(H_\kappa(X))) \big]  = \sum_{(\sigma_i)_{i=1}^k \in A^\perp_k} |A|^{-k}\P(\sigma_i \not \in H_\kappa(X), \forall 1 \leq i \leq k)\,.
	\end{equation}
By Lemma \ref{gaussian}, we have that for $k = \lfloor  c_\kappa \varphi_N^{-1} \sqrt{N} \rfloor$ and all $(\sigma_i)_{i=1}^k \in A^\perp_k$,
\begin{equation}
	\P(\sigma_i \not \in H_\kappa(X), \forall 1 \leq i \leq k) \leq C \P(|Z| > \kappa )^k\,.
\end{equation}
Hence
	\begin{equation}
	\label{eq:ExPAEH}
		\E\big[\P_A(E_k(H_\kappa(X))) \big]  \leq C \P(|Z| > \kappa )^k\,.
	\end{equation}
On the other hand, we set
\begin{equation}
	\Delta = \frac{\varphi_N x}{\sqrt{N}}\,.
\end{equation}
Then \eqref{eq:deltaA-as} implies $\Delta\geq \frac{4 k \delta_A}{q}$. Therefore, it follows from Lemma \ref{kpt} that on the event $\big\{\P_A\big(\rsm_1 \not \in H_\kappa(X) \mid H_\kappa(X)\big) \geq \P(|Z| > \kappa )  +  \Delta \big\}$, we have
\begin{align*}
	 \P_A\big(E_k[H_\kappa(X)] \mid H_\kappa(X)\big) &\geq \Big(\P_A(\rsm_1 \not \in H_\kappa(X) \mid H_\kappa(X)) -\Delta/2 \Big)^k \\
	 & \geq \P(|Z| > \kappa )^k e^{- k \Delta/2}\,.
\end{align*}
Combined with \eqref{eq:ExPAEH}, this yields \eqref{eq:LD-1}. 
\end{proof}

\subsection{Proof of Lemma \ref{TheInduction}}
By the definitions \eqref{def-tau123} and \eqref{def-tau}, we see that for all $0 \leq i \leq \tau-1$,
\begin{equation}
	|Y_i| \leq C\sqrt{\frac{|Q_{t-1}| + \log N}{N}} \leq C\sqrt{\frac{(\log N)^2 + \log N}{N}}\,.
\end{equation}
Hence it follows from $|\log (1 + x) - x| \leq  x^2$ for all $x \geq - 3/5$ that
\begin{equation}
	\Big|\sum_{i = 1}^{t \wedge \tau-1} \log (1 + Y_i)\Big| \leq  \Big|\sum_{i = 1}^{t \wedge \tau -1 } Y_i \Big| + \sum_{i = 1}^{t \wedge \tau -1 } Y_i^2 \leq \Big|\sum_{i = 1}^{t \wedge \tau } Y_i \Big| + \sum_{i = 1}^{t \wedge \tau } Y_i^2 + |Y_{t \wedge \tau}|\,.
\end{equation}
Since $\tau \leq \tau_S$, Lemma \ref{Xttail} implies
\begin{equation}
	\E[Y_i^2\1_{\tau \geq i}] = \E[\E_{i-1}[Y_i^2]\1_{\tau > i-1}] \leq C'\frac{\E[|Q_{i-1}|\1_{\tau > i-1}] + \log N}{N}\,.
\end{equation}
Therefore, since $(\sum_{i = 1}^{t \wedge \tau } Y_i)_{t \geq 1}$ is a martingale and $t\wedge \tau \leq \tau \leq \tau_S$, we have 
\begin{align*}
	\E\Big|\sum_{i = 1}^{t \wedge \tau-1} \log (1 + Y_i)\Big| &\leq  \sqrt{\E\sum_{i = 1}^{t \wedge \tau} Y_i^2} + \E\sum_{i = 1}^{t \wedge \tau} Y_i^2 + \E|Y_{t \wedge \tau}|\\
	& \leq 2\E\sum_{i = 1}^{t} Y_i^2\1_{ \tau \geq i} + 1 + \E|Y_{t \wedge \tau}|\\
	& \leq 2C' \cdot \frac{t\log N + \sum_{i = 1}^t \E[|Q_{i \wedge \tau - 1}|]}{N} + \frac{2}{p(\kappa)}\,,
\end{align*}
where $C'$ is a constant. Hence
\begin{equation}
	\E[|Q_{t\wedge \tau-1}|] \leq 3C' \Big( \frac{t\log N + \sum_{i = 1}^{t-1} \E[|Q_{i\wedge \tau-1}|]}{N} + \frac{1}{p(\kappa)}\Big)
\end{equation}
We claim that this yields \eqref{eq:QtIndBd}. To prove this, we define $b_1 := \E[|Q_{1\wedge \tau-1}|]$ and for $ t \geq 2$ define
\begin{equation}
\label{def:b-t}
	b_t := 3C' \Big( \frac{t\log N + \sum_{i = 1}^{t-1} b_i}{N} + \frac{1}{p(\kappa)}\Big)\,.
\end{equation}
Then a straightforward induction argument shows that
\begin{equation}
\label{eq:bt>E}
	b_t \geq \E[|Q_{t\wedge \tau-1}|] \quad \text{for }t \geq 1\,.
\end{equation}
On the other hand, \eqref{def:b-t} implies
\begin{align*}
	b_t &= 3C' \Big( \frac{\log N + b_{t-1}}{N} + \frac{(t-1)\log N + \sum_{i = 1}^{t-2} b_i}{N} + \frac{1}{p(\kappa)}\Big) \\
	&= 3C' \Big( \frac{\log N + b_{t-1}}{N} + \frac{b_{t-1}}{3C'}\Big)\,.
\end{align*}
This implies
\begin{equation}
\label{eq:bt-seq}
	b_t + \log N = \Big(1 + \frac{3C'}{N}\Big)(b_{t-1} + \log N) = \Big(1 + \frac{3C'}{N}\Big)^{t-1} (b_1 + \log N)\,.
\end{equation}
Note that $S_0 = \{-1,1\}^N$, $Q_0 = 0$ and $\P(\tau \geq 1) = 1$. We get $b_1 = 0$. Hence Lemma \eqref{eq:QtIndBd} follows from \eqref{eq:bt>E} and \eqref{eq:bt-seq}. We complete the proof of Lemma \ref{TheInduction}.

\subsection{Proof of Lemma \ref{timetau}}
\begin{lemma}
\label{Exp-R}
Under Assumption \ref{assumption1}, there exists a constant $C_2$ such that for every $1 \leq t \leq \alpha N$,
\begin{equation}
	\P \left( \P_t\Big(|\langle \rsm^{(t)}_1,\rsm^{(t)}_2 \rangle| \geq C_2 \sqrt{N}\sqrt{Q_t + \log N} \Big) \geq N^{-10} \right) \leq N^{-10}\,.
\end{equation}
As a result,
\begin{equation}
	\P(\tau_S\leq \alpha N) \leq N^{-10}\,. \label{eq:ST-1}
\end{equation}

\end{lemma}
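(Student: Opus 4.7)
The plan is a first-moment calculation on pairs of solutions combined with a dyadic decomposition over the possible values of $|Q_t|$, which absorbs the random denominator $|S_t|^2$ and avoids any circular appeal to concentration of $|S_t|$ (Lemmas~\ref{TheInduction} and~\ref{timetau} use Lemma~\ref{Exp-R} as an input). For each admissible $m$, independence of the constraints and the definition of $q(m)$ give
\[
\E\bigl|\{(\sigma_1,\sigma_2) \in S_t^2 : \langle \sigma_1,\sigma_2\rangle = m\}\bigr| = 2^N \binom{N}{(N-m)/2} q(m)^t.
\]
Dividing by $(\E|S_t|)^2 = 4^N p(\kappa)^{2t}$, setting $\beta = 1/2 - m/(2N)$, $\alpha' = t/N$, and applying Stirling together with the identity $q(m) = q_\kappa(\beta)$ from \eqref{eq:qmquk} produce
\[
\frac{\E\bigl|\{(\sigma_1,\sigma_2) \in S_t^2 : \langle \sigma_1,\sigma_2\rangle = m\}\bigr|}{(\E|S_t|)^2} \lesssim \frac{1}{\sqrt{N\beta(1-\beta)}} \exp\bigl(N \tilde{G}_{\alpha'}(\beta)\bigr),
\]
where $\tilde{G}_{\alpha'}(\beta) := H(\beta) - \log 2 + \alpha'\bigl[\log q_\kappa(\beta) - 2\log p(\kappa)\bigr]$, the pair-count exponent that appears in a second-moment analysis of $|S_t|$.

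Next I would establish a uniform quadratic upper bound on $\tilde{G}_{\alpha'}$. The symmetries $q_\kappa(\beta) = q_\kappa(1-\beta)$ and $H(\beta) = H(1-\beta)$ make $\tilde{G}_{\alpha'}$ symmetric about $1/2$, and direct computation gives $\tilde{G}_{\alpha'}(1/2) = 0$ and $\tilde{G}_{\alpha'}'(1/2) = 0$. Applying the argument of Lemma~\ref{lemDc} to $F_{\alpha'}$ at each $\alpha' \in [0,\alpha]$, Assumption~\ref{assumption1} forces $\tilde{G}_{\alpha'}''(1/2) < 0$ and constrains any other critical point on $(0,1/2)$ to be an interior minimum where $\tilde{G}_{\alpha'}$ is strictly negative; at the endpoint one has $\tilde{G}_{\alpha'}(0) = -\log 2 - \alpha' \log p(\kappa) \le -(\log 2 + \alpha \log p(\kappa)) < 0$ since $\alpha < \alpha_c$. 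A compactness/continuity argument in $\alpha' \in [0,\alpha]$ then yields a single constant $c = c(\alpha, \kappa) > 0$ with
\[
\tilde{G}_{\alpha'}(\beta) \le -c\bigl(\beta - \tfrac{1}{2}\bigr)^2 \qquad \text{for every } \beta \in [0,1],\ \alpha' \in [0,\alpha].
\]
Summing the first-moment estimate in $m$ converts this into the Gaussian tail
\[
\E W_t(M) \le C\,(\E|S_t|)^2 \exp\bigl(-cM^2/(4N)\bigr), \qquad W_t(M) := \bigl|\{(\sigma_1,\sigma_2) \in S_t^2 : |\langle \sigma_1,\sigma_2\rangle| \ge M\}\bigr|.
\]

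For the final conversion, I would dyadically decompose by $A_k = \{k \le |Q_t| < k+1\}$ and set $M_k = C_2\sqrt{N(k+\log N)}$. On $A_k$ one has $|S_t|^2 \ge (\E|S_t|)^2 e^{-2(k+1)}$, and the random threshold $C_2\sqrt{N(|Q_t|+\log N)}$ is at least $M_k$, so a failure of regularity on $A_k$ forces $W_t(M_k) \ge N^{-10}(\E|S_t|)^2 e^{-2(k+1)}$. Markov's inequality combined with the Gaussian tail gives
\[
\P\bigl(A_k \cap \{S_t \text{ not regular}\}\bigr) \le C N^{10} e^{2(k+1)} \exp\bigl(-\tfrac{1}{4}cC_2^2(k+\log N)\bigr).
\]
Choosing $C_2$ large enough that $\tfrac{1}{4}cC_2^2 \ge 23$ and summing over $k \ge 0$ produces $\P(S_t \text{ not regular}) \le C N^{-13}$ for each fixed $t$, and a union bound over $t \in \{0, 1, \ldots, \lfloor \alpha N \rfloor\}$ then delivers the claimed $\P(\tau_S \le \alpha N) \le N^{-10}$.

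The principal technical obstacle is the middle step: Assumption~\ref{assumption1} is formulated pointwise in $\alpha$, and some care is required to extract a \emph{single} quadratic upper bound on $\tilde{G}_{\alpha'}$ valid uniformly over $\alpha' \in [0,\alpha]$ and over all $\beta \in [0,1]$, including the endpoints where the Stirling expansion degenerates. Once this uniform Gaussian-type tail on $W_t(M)$ is in hand, the remaining work is standard bookkeeping of the Markov estimate across the dyadic shells of $|Q_t|$.
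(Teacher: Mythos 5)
Your proposal is correct and follows the same overall path as the paper: a first-moment count of pairs of solutions gives a Gaussian tail for $\E\big|\{(\sigma_1,\sigma_2)\in S_t^2:|\langle\sigma_1,\sigma_2\rangle|\ge\lambda\sqrt N\}\big|$, and this is then converted into the regularity statement. The difference is in the conversion step. The paper bounds the single exponential moment $\E\big[\exp\big(\tfrac{c}{2}\langle\rsm_1,\rsm_2\rangle^2/N+2Q_t\big)\big]\le C$ (using $e^{2Q_t}=|S_t|^2/(\E|S_t|)^2$ to absorb the random denominator) and applies Markov once; you instead decompose over unit-width shells of $|Q_t|$, use $|S_t|^2\ge(\E|S_t|)^2 e^{-2(k+1)}$ on each shell, and apply Markov shell by shell. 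Both handle the two signs of $Q_t$ correctly, since in either case the overlap threshold scales like $\sqrt{|Q_t|+\log N}$ so the $e^{2Q_t}$ factor is dominated once $C_2$ is large relative to $1/\sqrt{c}$; the paper's version is just a tidier packaging of the same computation. You also spell out what the paper leaves implicit (citing \cite{aubin2019storage}): that the uniform-in-$\alpha'$ quadratic bound $\tilde G_{\alpha'}(\beta)\le -c(\beta-\tfrac12)^2$ underlying the Gaussian tail follows from Assumption~\ref{assumption1}. One caution there: Assumption~\ref{assumption1} by itself does not ``force'' $F_{\alpha'}''(1/2)<0$; it is a conditional statement, and the fact that $F_{\alpha'}''(1/2)<0$ throughout $\alpha'<\alpha_c$ is a separate computational fact established in \cite{aubin2019storage}. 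Also, since $\partial_{\alpha'}\tilde G_{\alpha'}(\beta)=\log q_\kappa(\beta)-2\log p(\kappa)\ge 0$, the family $\tilde G_{\alpha'}$ is nondecreasing in $\alpha'$, so your compactness argument over $\alpha'\in[0,\alpha]$ can be replaced by the single bound at $\alpha'=\alpha$.
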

% \begin{lemma}
% If
% \begin{equation}
% 		\E\Big|\log \Big(\frac{|S_{t}|}{\E[|S_{t}|]} \Big)	\Big| \leq \omega(N) \quad w.h.p.\,,
% \end{equation}
% then
% \begin{equation}
% 	\Big|\log \Big(\frac{|S_{t+1}|}{\E[|S_{t+1}|]} \Big) \Big| \leq \Big|\log \Big(\frac{|S_{t}|}{\E[|S_{t}|]} \Big)\Big|+  \frac{CM}{N} \,.
% \end{equation}
% \end{lemma}
\begin{proof}
	Since there exist a constant $c>0$ such that for any $\lambda > 0$ and $t \geq 0$,
\begin{equation}
	\E[|\{(\sigma_1,\sigma_2) \in S_t : |\langle \sigma_1,\sigma_2 \rangle| \geq \lambda \sqrt{N} \}|] \leq \exp(-c\lambda^2)\E[|S_t|]^2\,,
\end{equation}
we have that 
\begin{equation}
	\E \left[\P_t(|\langle \rsm^{(t)}_1,\rsm^{(t)}_2 \rangle| \geq \lambda \sqrt{N} ) \frac{|S_t|^2}{\E[|S_t|]^2} \right] \leq C\exp(-c\lambda^2)\,,
\end{equation}
where we used $\E[|S_t|^2] \leq C\E[|S_t|]^2$ (see \cite{aubin2019storage}).
Therefore,
\begin{align*}
	&\E \left[\exp\left(\frac{c}{2}\frac{\langle \rsm^{(t)}_1,\rsm^{(t)}_2 \rangle^2}{N}  + 2 Q_t\right) \right]\\
	 & \quad = \E \left[\int_{0}^\infty c\lambda e^{\frac{c \lambda^2}{2}}\P_t\Big(\frac{\langle \rsm^{(t)}_1,\rsm^{(t)}_2 \rangle^2}{N} \geq \lambda\Big)\dif \lambda \cdot \frac{|S_t|^2}{\E[|S_t|]^2}\right] \\
	& \quad  \leq  C \int_{0}^\infty c\lambda e^{\frac{-c \lambda^2}{2}}\dif \lambda\\
	& \quad  \leq C\,. 
\end{align*}
Hence, there exists a constant $C$ such that
\begin{equation}
	\P(|\langle \rsm^{(t)}_1,\rsm^{(t)}_2 \rangle| \geq C \sqrt{N}\sqrt{Q_t + \log N} ) \leq N^{-100}\,,
\end{equation}
which yields the desired result.
\end{proof}

 \begin{lemma}There exists a constant $C_3>0$ such that
 \label{ST-123}
\begin{align}
	&\P(\tau_Y \leq \tau_S, \tau_Y \leq \alpha N) \leq N^{-8}\label{eq:ST-2}\\
	&\P(|Q_{\tau_Q -1}| \leq (\log N)^2/2 , \tau_Q \leq \tau_S, \tau_Q \leq \alpha N)  \leq N^{-8}\label{eq:ST-3}
\end{align}	
\end{lemma}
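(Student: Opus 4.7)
The plan is to reduce both bounds of Lemma~\ref{ST-123} to a single time-by-time application of Lemma~\ref{Xttail} combined with a union bound over $1 \leq t \leq \alpha N$. On the event $\{\tau_S \geq t\}$ (i.e.\ $S_{t-1}$ is regular), Lemma~\ref{Xttail} applied to $Y_t$ yields an exponential tail, and in both parts of the lemma the bad event is detectable from the single increment $Y_t$---either directly, because the threshold defining $\tau_Y$ is built out of the same scale $\sqrt{(|Q_{t-1}|+\log N)/N}$ that controls Lemma~\ref{Xttail}, or indirectly, because a jump in $|Q_t|$ of size $(\log N)^2/2$ must force $|Y_t|$ to be of constant order.

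For \eqref{eq:ST-2}, I would decompose according to the value of $\tau_Y$ and bound
\[
\P(\tau_Y \leq \tau_S,\, \tau_Y \leq \alpha N) \leq \sum_{t=1}^{\lfloor \alpha N\rfloor}\E\Bigl[\1_{\tau_S > t-1}\,\P_{t-1}\bigl(|Y_t|\geq C_3\sqrt{(|Q_{t-1}|+\log N)/N}\,\log N\bigr)\Bigr].
\]
Applying Lemma~\ref{Xttail} with $x = (C_3/C)\log N$ bounds each inner conditional probability by $CN^{-C_3/C}$, so choosing $C_3$ to be a sufficiently large multiple of the constant $C$ of Lemma~\ref{Xttail} (say $C_3 \geq 10C$) yields summands of size $O(N^{-10})$ and a total of $O(N^{-9})$, which is $o(N^{-8})$. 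This step also pins down the value of $C_3$ used throughout \eqref{def-tau123}.

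For \eqref{eq:ST-3}, the key step is to argue that on the event $\{\tau_Q = t,\, |Q_{t-1}| \leq (\log N)^2/2\}$ one is forced to have $|Y_t| \geq 1/2$. By the identity \eqref{eq:QtXt} the increment satisfies $|Q_t| - |Q_{t-1}| \leq |\log(1+Y_t)|$, and since $Y_t \leq (1-p(\kappa))/p(\kappa)$ we have $\log(1+Y_t) \leq -\log p(\kappa) = O(1)$; hence the only way the increment can exceed $(\log N)^2/2$ is via $\log(1+Y_t) \leq -(\log N)^2/2$, which puts $1+Y_t$ exponentially small and gives $|Y_t|\geq 1/2$ for $N$ large. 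On $\{\tau_S \geq t,\, |Q_{t-1}| \leq (\log N)^2/2\}$ the scale $\sqrt{(|Q_{t-1}|+\log N)/N}$ is $O(\log N/\sqrt{N})$, so taking $x$ of order $\sqrt{N}/\log N$ in Lemma~\ref{Xttail} gives $\P_{t-1}(|Y_t|\geq 1/2) \leq C\exp(-c\sqrt{N}/\log N)$; a union bound over $t \leq \alpha N$ yields a total much smaller than $N^{-8}$.

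The main obstacle I foresee is not conceptual but a careful bookkeeping of constants: $C_3$ must be chosen large enough in terms of the Lemma~\ref{Xttail} constant $C$ for the first bound, and then treated as fixed for the rest of Section~\ref{secConcentrate}. A secondary point is that the passage from ``$(\log N)^2/2$ jump'' to ``$|Y_t|\geq 1/2$'' in the second part uses that $N$ is large enough that $(\log N)^2/2 > -\log p(\kappa)$, a trivial asymptotic that can be absorbed into constants for small $N$.
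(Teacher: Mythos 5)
Your proposal is correct and takes essentially the same route as the paper's (very terse) proof: part one is exactly ``Lemma~\ref{Xttail} with $x\asymp\log N$ plus a union bound over $t\le\alpha N$,'' and part two decomposes on the value of $\tau_Q$, uses the deterministic upper bound $Y_t\le (1-p(\kappa))/p(\kappa)$ to force $Y_t\le -1/2$ on the bad event, and then applies Lemma~\ref{Xttail} at scale $x\asymp\sqrt N/\log N$ on the event $\{|Q_{t-1}|\le(\log N)^2/2,\ \tau_S\ge t\}$. You have simply filled in the constant-tracking that the paper leaves implicit.
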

\begin{proof}
\eqref{Xttail} follows from Lemma \ref{Xttail} and a union bound. Lemma \ref{Xttail} gives	\begin{equation*}
	\begin{split}
		&\P(|Q_{\tau_Q -1}| \leq (\log N)^2/2 , \tau_Q \leq \tau_S, \tau_Q =t+1) \\
		& \quad = \E\big[\P_t(|Q_t + \log (1 + Y_{t+1})| \geq (\log N)^2) \1_{|Q_t| \leq (\log N)^2/2,\tau_S \geq t +1}  \big]\\
		& \quad \leq  \E\big[\P_t(Y_t \leq - 1/2) \1_{|Q_t| \leq (\log N)^2/2,\tau_S \geq t +1}  \big]\\
		& \quad \leq N^{-10}\,.
	\end{split}
	\end{equation*}
This yields \eqref{eq:ST-3}.
\end{proof}

\begin{proof}[\bf Proof of Lemma \ref{timetau}]
	Write \begin{align*}
	\P(\tau \geq t) &\leq \P(\tau_S\leq \alpha N) + \P(\tau_Y \leq \tau_S, \tau_Y \leq \alpha N) + \P(\tau_Q \leq t,\tau_Q = \tau,|Q_{\tau_Q -1}| \leq (\log N)^2/2)\\
	& \quad + \P(\tau_Q \leq t,\tau_Q = \tau,|Q_{\tau_Q -1}| \geq (\log N)^2/2)\,.
\end{align*}
Lemma \ref{ST-123} and \eqref{eq:ST-1} give upper bounds on the first three probability on the right hand. In addition, Lemma \ref{TheInduction} and the Markov inequality yield
\begin{align*}
	&\P(\tau_Q \leq t,\tau_Q = \tau,|Q_{\tau_Q -1}| \geq (\log N)^2/2)\\
	& \quad  \leq ((\log N)^2/2)^{-1}\E[Q_{t \wedge \tau - 1} ;\tau_Q \leq t,\tau_Q = \tau,|Q_{\tau_Q -1}| \geq (\log N)^2/2] \\
	& \quad \leq ((\log N)^2/2)^{-1}\E[|Q_{t \wedge \tau -1}|]\\
	& \quad \leq (\log N)^{-1/2}\,.
\end{align*}
Combining these bounds yields $\P(\tau \geq t) \leq 2(\log N)^{-1/2}$.
\end{proof}

\section{From the planted model to the random model}
\label{secQuiet}

Theorem \ref{lem:strong-clustering} will follow from  Lemma \ref{pl-cluster} and the following lemma on the planted model.
\begin{lemma}
\label{contiguity}
Suppose %$A \subseteq \Sigma_N \times (\R^N)^m$, 
$A \subseteq \{ (\sigma, U) : \sigma \in U \subseteq \Sigma_N \}$, 
and suppose $\alpha < \alpha_c$.  If $\P_{\pl}((\sigma^*, S(\mathbf X)) \in A) \leq N^{-m_N}$ for some $m_N \to \infty$, then $$\P_{\mathrm{r}}((\sigma^*, S(\mathbf X)) \in A) \to 0 \quad\text{ as }N \to \infty\,.$$
\end{lemma}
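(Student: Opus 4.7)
The plan is to express both $\P_{\pl}(A)$ and $\P_{\mathrm{r}}(A)$ as expectations under the \emph{unconditioned} measure in which $\mathbf X=(X_1,\dots,X_m)$ is i.i.d.\ standard Gaussian, and then to compare them on a high-probability event where $|S(\mathbf X)|$ is not much smaller than its expectation $\E|S|=2^Np(\kappa)^m$. Setting $f(\mathbf X):=\sum_{\sigma\in S(\mathbf X)}\mathbf 1_A(\sigma,S(\mathbf X))$, a direct calculation using the definition of the planted law gives
\begin{equation}
\P_{\pl}(A)=\frac{1}{2^Np(\kappa)^m}\,\E\!\left[\sum_{\sigma\in S(\mathbf X)}\mathbf 1_A(\sigma,S(\mathbf X))\right]=\frac{\E[f(\mathbf X)]}{\E|S(\mathbf X)|},
\end{equation}
while the random-model law produces
\begin{equation}
\P_{\mathrm r}(A)=\frac{1}{\P(S\neq\emptyset)}\,\E\!\left[\frac{f(\mathbf X)}{|S(\mathbf X)|}\,\mathbf 1_{\{S\neq\emptyset\}}\right].
\end{equation}

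Given $\eps>0$, I would then invoke Theorem~\ref{PartitionF} (equivalently Theorem~\ref{Stub}) to choose a constant $M=M(\eps)$ such that the event $G:=\{|S(\mathbf X)|\geq N^{-M}\E|S|\}$ has $\P(G)\geq 1-\eps/2$ for all $N$ large, and such that $\P(S\neq\emptyset)\geq 1/2$ (this last uses the ``in particular'' clause of Theorem~\ref{PartitionF}). On $G$ we have the pointwise bound $f(\mathbf X)/|S(\mathbf X)|\leq N^M f(\mathbf X)/\E|S|$, while on $G^c\cap\{S\neq\emptyset\}$ we use only the trivial bound $f/|S|\leq 1$. Splitting the expectation accordingly,
\begin{equation}
\P_{\mathrm r}(A)\cdot\P(S\neq\emptyset)\;\leq\;\frac{N^M}{\E|S|}\,\E[f(\mathbf X)]\;+\;\P(G^c)\;=\;N^M\,\P_{\pl}(A)+\P(G^c).
\end{equation}

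Plugging in the hypothesis $\P_{\pl}(A)\leq N^{-m_N}$ with $m_N\to\infty$ gives
\begin{equation}
\P_{\mathrm r}(A)\leq 2\bigl(N^{M-m_N}+\tfrac{\eps}{2}\bigr),
\end{equation}
so $\limsup_{N\to\infty}\P_{\mathrm r}(A)\leq\eps$. Since $\eps$ was arbitrary, the lemma follows. The only nontrivial input is the lower-tail concentration of $|S|$ from Theorem~\ref{PartitionF}; the rest is bookkeeping about changes of measure between the random and planted models. I do not anticipate a serious obstacle here, since the hard work of proving concentration has already been done in Section~\ref{secConcentrate}; the main subtlety is making sure the quantitative $\log N/N$ rate in Theorem~\ref{PartitionF} translates into a polynomial-in-$N$ lower bound on $|S|/\E|S|$ that can absorb the $N^{-m_N}$ assumption.
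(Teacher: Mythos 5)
Your proof is correct and takes essentially the same route as the paper's: both change measure via the identity $\P_{\pl}(A) = \E_{\mathrm r}[\1_A\,|S|/\E|S|]\cdot\P(S\neq\emptyset)$ (the paper isolates this as Lemma~\ref{17}, you derive it inline) and then cut on an event where $|S|/\E|S|$ is not too small, using Theorem~\ref{PartitionF} for the probability of that event. One small remark worth recording: your choice of cutoff event $G=\{|S|\geq N^{-M}\E|S|\}$ with $M=M(\eps)$ a constant from Theorem~\ref{Stub} is actually the right one; the paper's printed proof defines $G$ with the threshold $\exp\{-m_N\log N\}$, which upon multiplying by $\P_{\pl}(A)\leq N^{-m_N}$ yields $N^{m_N-m_N}=1$ rather than $o(1)$, so a constant (or any $m_N'$ with $m_N'\to\infty$, $m_N'=o(m_N)$) must be used there, exactly as you did.
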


We start by characterizing the planted distribution. 
\begin{lemma}
\label{17}For any $N,m \geq 1$, with $A \subseteq \{ (\sigma, U) : \sigma \in U \subseteq \Sigma_N \}$, 
 % $A \subseteq \Sigma_N \times (\R^N)^m$, %$\tau \in \Sigma_N$, $\mathbf X \in (\R^N)^{m}$,
\label{3.1}
		\begin{equation}
		\P_{\pl}((\sigma^*, S(\mathbf X)) \in A) = \E _{\mathrm{r}}\left[\1_{(\sigma_N, S(\mathbf X)) \in A}\cdot \frac{|S(\mathbf X)|}{\E|S(\mathbf X)|}\right] \cdot \P ( S(\mathbf X) \ne \emptyset )\,.
	\end{equation}
\end{lemma}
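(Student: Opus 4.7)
The plan is to prove the identity as a direct change-of-measure (Bayes) calculation: write the joint law of $(\sigma^*, \mathbf X)$ under $\P_{\pl}$ explicitly, recognize that the normalizing constant is $1/\E|S(\mathbf X)|$, and then convert the resulting sum over solutions into a random-model expectation. Under the planted model $\sigma^*$ is uniform on $\Sigma_N$ and, conditionally on $\sigma^*$, the vectors $X_i$ are independent standard Gaussians conditioned on $\sigma^* \in H_\kappa(X_i)$, an event of Gaussian probability $p(\kappa)$. So for any bounded function $F(\sigma, \mathbf x)$,
$$\E_{\pl}[F(\sigma^*, \mathbf X)] = \frac{1}{2^N p(\kappa)^m} \sum_{\sigma \in \Sigma_N} \E\big[F(\sigma, \mathbf X) \1_{\sigma \in S(\mathbf X)}\big],$$
where the right-hand expectation is under the unconditional Gaussian law of $\mathbf X$.

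Next I would identify the prefactor: since $\E|S(\mathbf X)| = \sum_{\sigma \in \Sigma_N} \P(\sigma \in S(\mathbf X)) = 2^N p(\kappa)^m$, we have $2^{-N} p(\kappa)^{-m} = 1/\E|S(\mathbf X)|$. Specializing to $F(\sigma,\mathbf x) = \1_{(\sigma, S(\mathbf x)) \in A}$ and interchanging the sum with the expectation gives
$$\P_{\pl}((\sigma^*, S(\mathbf X)) \in A) = \frac{1}{\E|S(\mathbf X)|}\, \E\bigg[\sum_{\sigma \in S(\mathbf X)} \1_{(\sigma, S(\mathbf X)) \in A}\bigg],$$
where again the outer expectation is under the unconditional law of $\mathbf X$.

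Finally, I would rewrite the inner sum. On the event $\{S(\mathbf X) \neq \emptyset\}$ the sum equals $|S(\mathbf X)|$ times the average of $\1_{(\sigma, S(\mathbf X)) \in A}$ over $\sigma \in S(\mathbf X)$, which by the very definition of $\E_{\mathrm{r}}$ (uniform pick $\sigma_N$ from $S(\mathbf X)$, conditional on $S \neq \emptyset$) is exactly $\E_{\mathrm{r}}[\1_{(\sigma_N, S(\mathbf X)) \in A} \mid \mathbf X]$ on that event. Converting the outer unconditional expectation of $\mathbf X$-measurable quantities times $\1_{S \neq \emptyset}$ into the conditional expectation $\E_{\mathrm{r}}$ contributes the factor $\P(S(\mathbf X) \neq \emptyset)$, and the tower property (using that $|S(\mathbf X)|$ is $\mathbf X$-measurable) then yields
$$\P_{\pl}((\sigma^*, S(\mathbf X)) \in A) = \P(S \neq \emptyset) \cdot \E_{\mathrm{r}}\bigg[\1_{(\sigma_N, S(\mathbf X)) \in A} \cdot \frac{|S(\mathbf X)|}{\E|S(\mathbf X)|}\bigg],$$
as claimed. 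The only real obstacle is notational bookkeeping: one must keep careful track of whether a given expectation is over the unconditional Gaussian law of $\mathbf X$, over the conditional law defining $\P_{\mathrm{r}}$, or additionally over the uniform pick $\sigma_N \in S(\mathbf X)$. The underlying identity is simply Bayes' rule, and the likelihood ratio $|S(\mathbf X)|/\E|S(\mathbf X)|$ is the source of the closeness-of-measures theme (``quiet planting'') that drives the rest of Section~\ref{secQuiet}.
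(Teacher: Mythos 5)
Your proof is correct and is essentially the same argument as the paper's, up to presentation: you work with a test-function / change-of-measure formulation (write $\E_{\pl}[F]$ as a sum over $\sigma$ of unconditional Gaussian expectations with the factor $\1_{\sigma\in S(\mathbf X)}$, then identify the normalization $2^N p(\kappa)^m=\E|S|$), whereas the paper reduces to the equivalent pointwise identity $\P_{\pl}(\sigma^*=\sigma,S=U)=\P_{\mathrm r}(\sigma^*=\sigma,S=U)\cdot\frac{|U|}{\E|S|}\cdot\P(S\neq\emptyset)$. The key facts used — the planted density, $\E|S|=|\Sigma_N|\,p(\kappa)^m$, and the conversion of an unconditional $\mathbf X$-expectation into $\E_{\mathrm r}$ at the cost of $\P(S\neq\emptyset)$ — are identical.
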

\begin{proof}
	It suffices to prove that for any $\sigma \in \Sigma_N$ and $U \subset \Sigma_N$ such that $\sigma \in U$, we have
	\begin{equation}
	 \label{eq:71}
		\P_{\pl}(\sigma^* = \sigma, S (\mathbf X) = U) = \P_{\mathrm{r}}(\sigma^* = \sigma, S(\mathbf X) = U) \frac{|U|}{\E|S(\mathbf X)|} \cdot \P ( S(\mathbf X) \ne \emptyset )\,.
	\end{equation}
To this end, write
\begin{equation*}
\begin{split}
		\P_{\mathrm{pl}}(\sigma^* = \sigma, S (\mathbf X) = U) &= |\Sigma_N|^{-1}\P(S (\mathbf X) = U \mid \sigma \in S (\mathbf X))\\
		& = \frac{\P(S (\mathbf X) = U ,\sigma \in S (\mathbf X))}{|\Sigma_N|\P(\sigma \in S (\mathbf X))}\\
		& = \frac{\P(S (\mathbf X) = U)}{|\Sigma_N|\P(\sigma \in S (\mathbf X))}\,.
\end{split}
\end{equation*}
Note that $\E|S(\mathbf X)| = |\Sigma_N|\P(\sigma \in S (\mathbf X))$ and that $$ \P_{\mathrm{r}}(\sigma^* = \sigma, S(\mathbf X) = U) \cdot \P ( S(\mathbf X) \ne \emptyset ) = |U|^{-1}\P(S(\mathbf X) = U)\,.$$
We get \eqref{eq:71}, and thus complete the proof of Lemma \ref{17}.
\end{proof}

\begin{proof}[\bf Proof of Lemma \ref{contiguity}]
Theorem \ref{PartitionF} implies the event
	\begin{equation}
		G =\left\{\frac{|S|}{\E|S|}  \geq \exp \{-m_N \log N\}\right\}
	\end{equation}
	has probability  $1-o(1)$. Therefore, as $N \to \infty$,
	\begin{equation}
	\begin{split}
		\P_{\mathrm r}((\sigma^*, S) \in A) &\leq \P_{\mathrm r}\Big(\{(\sigma^*, S) \in A\} \cap  G\Big) + \P(G)/\P(S \not = \emptyset)\\
		& \leq \exp \{m_N \log N\}\E\left[\1_{(\sigma^*, S) \in A}\cdot \frac{|S|}{\E|S|}\right] + o(1)\\
		& = \exp\{m_N \log N\}\P_{\mathrm{pl}}((\sigma^*, S) \in A)/\P(S \not = \emptyset) + o(1)\\
		& = o(1)\,.
	\end{split} 
	\end{equation}
where we used Lemma \ref{3.1} and Theorem \ref{PartitionF}. This yields Lemma \ref{contiguity}.
\end{proof}

Finally we prove Theorem~\ref{lem:strong-clustering}.
\begin{proof}[Proof of Theorem~\ref{lem:strong-clustering}] Let 
\begin{equation}
	A = \left\{(\tau, U): \{ \sigma \in U: \langle \sigma, \tau \rangle  \geq (d_{c} + \delta)N \} \not = \{\tau\} \right\}\,.
\end{equation}
Then Lemma \ref{pl-cluster} implies that
\begin{equation}
	\P_{\pl}((\sigma^*, S(\mathbf X )) \in A) \leq \exp\left\{- c\sqrt{N}\right\}
\end{equation}
Combined with Lemma \ref{contiguity}, this yields
$$ \P_{\mathrm{r}}((\sigma^*, S(\mathbf X)) \in A) = o(1)   \,,$$
and thus Theorem \ref{lem:strong-clustering}.
\end{proof}

\section*{Acknowledgements}
We thank Benjamin Aubin and Lenka Zdeborov{\'a} for inspiring discussions and introducing us to this problem.  

\newcommand{\etalchar}[1]{$^{#1}$}

\end{document}